\theoremstyle{definition}
\newtheorem {definition}{Definition}[section]
\theoremstyle{plain}
\newtheorem {theorem}{Theorem}[section]
\newtheorem {proposition}{Proposition}[section]
\newtheorem {corollary}{Corollary}[section]
\newtheorem {lemma}{Lemma}[section]
\newtheorem {example}{Example}[section]
\newtheorem*{Theorem}{Theorem}
\newtheorem*{Lemma}{Lemma}
\newtheorem*{Definition}{Definition}
\newtheorem {remark}{Remark}[section]
\newcommand{\dist}{\operatornamewithlimits{dist}}
\def\ar{a\kern-.370em\raise.16ex\hbox{\char95\kern-0.53ex\char'47}\kern.05em}
\def\ees{{\accent"5E e}\kern-.385em\raise.2ex\hbox{\char'23}\kern-.08em}
\def\EES{{\accent"5E E}\kern-.5em\raise.8ex\hbox{\char'23 }}
\def\ow{o\kern-.42em\raise.82ex\hbox{
		\vrule width .12em height .0ex depth .075ex \kern-0.16em \char'56}\kern-.07em}
\def\OW{O\kern-.460em\raise1.36ex\hbox{
		\vrule width .13em height .0ex depth .075ex \kern-0.16em \char'56}\kern-.07em}
\title{Fedoryuk values and stability of global H\"{o}lderian error bounds for polynomial functions}
\author[HUY-VUI H\`A]{HUY-VUI H\`A$^{\dag}$}
\address{$^{\dag}$Thang Long Institute of Mathematics and Applied Sciences,
     \newline \indent  Nghiem Xuan Yem Road,
     \newline \indent 
      Hoang Mai, District, Hanoi, Vietnam}
\email{hhvui@math.ac.vn} 
\author[PHI-D\~{U}NG HO\`{A}NG]{PHI-D\~{U}NG HO\`{A}NG$^{\ddag}$}
\address{$^{\ddag}$Department of Mathematics - Faculty of Fundamental Sciences,
	\newline \indent Laboratory of Applied Mathematics and Computing,
	\newline \indent  Posts and Telecommunications Institute of Technology,
	\newline \indent 
	Km10 Nguyen Trai Road, Ha Dong District, Hanoi, Vietnam}
\email{dunghp@ptit.edu.vn}
\subjclass[2010]{49K40, 14P10, 90C26}
\keywords{Error bounds, Stability, Polynomial Optimization}
\begin{document}

\maketitle

\begin{abstract}
	In this paper we study the stability of a global H\"{o}lderian error bound of the sublevel set $[f \le t]$ under perturbation of $t$, where $f$ is a polynomial function in $n$ real variables. Firstly, we give two formulas which compute the set
	$$H(f) := \{ t \in \mathbb{R}: [f \le t]\ \text{has a global H\"{o}lderian error bound}\}$$ via some special Fedoryuk values of $f$. Then, based on these formulas, we can determine the stability type of a global H\"{o}lderian error bound of $[f \le t]$ for any value $t \in \mathbb{R}$. 
\end{abstract}
\pagestyle{plain}

\section{Introduction}
Let $f : \mathbb{R}^n \to \mathbb{R}$ be a polynomial function. For $t \in \mathbb{R}$, put $$[f \le t]:=\{x \in \mathbb{R}^n | f(x) \le t\}$$ and $[a]_+ := \max\{0, a\}$. 
 \begin{definition}\cite{Ha}
	We say that the nonempty set $[f \le t]$ has a global H\"{o}lderian error bound (GHEB for short) if there exist $\alpha, \beta, c >0$ such that 
	\begin{equation}\label{Eqn1}
	[f(x) - t]_+^\alpha + [f(x) - t]_+^\beta \ge c\dist(x, [ f \le t])\ \text{for all}\ x \in \mathbb{R}^n.
	\end{equation}
\end{definition}
Note that, if $\alpha = \beta = 1$, then (\ref{Eqn1}) becomes a global Lipschitzian error bound for $[f \le t]$.

The existence of error bounds have many important applications, including sensitivity analysis, convergence analysis in optimization problems, variational inequalities...
After the earliest work by Hoffman (\cite{Hoff}) and extended paper of Robinson (\cite{Ro}), the study of error bounds has received rising awareness in many papers of mathematical programming in recent years, see \cite{LL, WP, LS, Y, LiG1, LiG2, Ha, Ng, LMP, DHP} (for the case of polynomial functions) and \cite{Hoff, Ro, M, AC, LiW, K, KL, P, LP, Luo, Jo, NZ, CM, LTW, I, BNPS, DL} (for non-polynomial cases). The reader is referred to survey papers \cite{LP, P, Az, I} and the references therein for the theory and applications of error bounds. 

Studying the stability of error bounds under perturbation is fundamental and hard problem. It has been investigated recently in the works of Daniel, Luo-Tseng, Deng, Ngai-Kruger-Th\'{e}ra, Kruger-Ngai-Th\'{e}ra, Kruger-L\'{o}pez-Th\'{e}ra,... (see \cite{Da, LT, D, NKT, KNT, KLT}).

In this paper, we study stability of a global H\"{o}lderian error bound for the set $[f \le t]$ under a perturbation of $t$, i.e. the perturbation of $f$ by a constant term. The following questions arise
\begin{enumerate}
	\item[1.] Suppose that $[f \le t]$ has a GHEB, when does there exist an open interval $I(t) \subset \mathbb{R}, t \in I(t)$, such that for any $t' \in I(t)$, $[f \le t']$ has also a GHEB?
	\item[2.] Suppose that $[f \le t]$ does not have GHEB, when does there exist an open interval $I(t) \subset \mathbb{R}, t \in I(t)$, such that for any $t' \in I(t)$, $[f \le t']$ also does not have GHEB?
	\item[3.] Are there other types of stability which are different from types in questions 1 and 2?
\end{enumerate}
To classify the stability types of GHEB, our idea is computing the set $$H(f) := \{ t \in \mathbb{R}: [f \le t]\ \text{has a global H\"{o}lderian error bound}\}.$$ It turns out that the set $H(f)$ can be determined via some speacial values of the Fedoryuk set of $f$. 

According \cite{KOS}, the Fedoryuk set $F(f)$ of a polynomial $f$ is defined by $$ F(f):=\{t \in \mathbb{R} : \exists \{x^k\} \subset \mathbb{R}^n, \|x^k\| \to \infty, \|\nabla f(x^k)\| \to 0, f(x^k) \to t\}. $$
We will show that there exists a value $h(f) \in F(f) \cup \{\pm \infty\}$, which will be called the {\em threshold} of global H\"{o}lderian error bounds of $f$ and a subset $F^1(f)$ of $F(f)$, such that $$\text{Either}\ H(f) = [h(f), +\infty) \setminus F^1(f)\ \text{or}\ H(f) = (h(f), +\infty) \setminus F^1(f). $$
Since $F^1(f)$ is a semialgebraic subset of $\mathbb{R}$, this formula allows us answer the questions 1 and 2. Moreover, we can discover some other types of stability which are different from the types in questions 1-2 and give the list of all possible types of stability. 

The paper is organized as follows. In Section 2, we give two different formulas for computing the set $H(f)$. The first formula is based on criterion for the existence of GHEB for $[f \le t]$, given in \cite{Ha}. The second formula follows from a new criterion for the existence of global H\"{o}lderian error bounds. In Section 3, the relationship between $H(f)$ and the set of Fedoryuk values of $f$ will be established. In Section 4, we use the formulas of $H(f)$ and relationship between $H(f)$ and $F(f)$ to study our problems. It turns out that $F(f)$ is a semialgebraic subset of $\mathbb{R}$, hence $F(f)$ is either empty, or a finite set or a disjoint of finite number of points and intervals. Therefore, it is convenient to consider each of these cases separately.

In Subsection 4.1, we consider the case $F(f) = \emptyset$. In this case, $H(f) = (\inf f, +\infty)$ or $H(f) = [\inf f, +\infty)$ (Theorem \ref{thm41}). Therefore, there are two stability types of GHEB if $H(f) = [\inf f, +\infty)$. Namely, any point $t$ of $(\inf f, +\infty)$ is {\em y-stable}, by this we mean that $t \in H(f)$ and there exists an open interval $I(t)$ such that $t \in I(t) \subset H(f)$. Besides, $t = \inf f$ is {\em y-right stable}, by this we mean that $t \in H(f)$ and there exists $\epsilon > 0$ such that $[t, t + \epsilon) \subset H(f)$ and $(t - \epsilon, t ) \cap H(f) = \emptyset$. Note that, for almost every polynomial $f$, $F(f) = \emptyset$. Hence, $H(f) = (\inf f, +\infty)$ or $[\inf f, +\infty)$ if $f$ is {\em generic} (Remark \ref{remark41}).
	
In Subsection 4.2, we consider the case when $F(f)$ is a non-empty finite set. In this case, we show that
	\begin{itemize}
		\item $H(f) \ne \emptyset$ (Proposition \ref{Prop3.1});
		\item Beside of y-stable type and y-right stable, there are at most 4 other stability types of GHEB. We have
		\begin{description}
			\item[Case A] If $h(f) = -\infty$, then there are 2 types
			\begin{enumerate}
				\item[(i)] $t$ is y-stable.
				\item[(ii)] $t$ is a {\em n-isolated point}: $t \in \mathbb{R}\setminus H(f)$ and for $\epsilon > 0$ sufficiently small, $(t - \epsilon, t) \cup (t, t + \epsilon) \subset H(f)$. 
			\end{enumerate}
			
			\item[Case B] If $h(f)$ is a finite value, then there are 5 types for all $t \in [\inf f, +\infty)$ 
			\begin{enumerate} 
				\item[1.] $t$ is y-stable; 
				
				\item[2.] $t$ is y-right stable; 			
				
				\item[1'.] $t$ is {\em n-stable}: $t \in [\inf f, +\infty)\setminus H(f)$ and there exists an open interval $I(t)$ such that $t \in I(t) \subset [\inf f, +\infty) \setminus H(f)$;
				
				\item[2'.] $t$ is {\em n-right stable}: $t \in [\inf f, +\infty) \setminus H(f)$ and there exists $\epsilon > 0$ such that $[t, t + \epsilon) \subset [\inf f, +\infty)\setminus H(f)$ and $(t - \epsilon, t ) \cap ([\inf f, +\infty) \setminus H(f)) = \emptyset$; 
				
				\item[3'.] $t$ is {\em n-left stable}: $t \in [\inf f, +\infty) \setminus H(f)$ and there exists $\epsilon > 0$ such that $(t - \epsilon, t] \subset [\inf f, +\infty) \setminus H(f)$ and $(t, t + \epsilon) \cap H(f) \ne \emptyset$;
				
				\item[4'.] $t$ is a n-isolated point;
			\end{enumerate}
		Note that:
		\begin{itemize}
			\item If $t$ is y-right stable or $t$ is n-left stable, then it is necessarily that $t = h(f)$;
			\item If $t$ is n-right stable, then it is necessarily that $t = \inf f < h(f)$ and $f^{-1}(\inf f) \ne \emptyset$.
		\end{itemize}
		  			
		\end{description}
	\item We can determine the type of stability of any $t \in [\inf f, +\infty)$ (Theorem \ref{thm43});
	\item We give an estimation of the number of connected components of $H(f)$ (Theorem \ref{thm44});
	\end{itemize}	

	In Subsection 4.3, we consider the case when $\# F(f) = +\infty$. In this case 
	\begin{itemize}
		\item Any value $t$ of $[\inf f, +\infty)$ belongs to one of the following types
		\begin{enumerate} 
			\item[1.] $t$ is y-stable;
			
			\item[2.] $t$ is y-right stable;
			
			\item[3.] $t$ is {\em y-left stable}: $t \in H(f)$ and there exists $\epsilon > 0$ such that $(t - \epsilon, t] \subset H(f)$ and $(t, t + \epsilon) \cap H(f) = \emptyset$;
			
			\item[4.] $t$ is an {\em y-isolated point}: $t \in H(f)$ and for $\epsilon > 0$ sufficiently small, $(t - \epsilon, t) \cup (t, t + \epsilon) \subset (\inf f, +\infty) \setminus H(f)$;  
			
			\item[1'.] $t$ is n-stable;
			
			\item[2'.] $t$ is n-right stable;
			
			\item[3'.] $t$ is n-left stable;
			
			\item[4'.] $t$ is an n-isolated point.  
		\end{enumerate}
	    \item We can determine the type of stability of any $t \in [\inf f, +\infty)$ (Theorem \ref{thm45}).
	\end{itemize}
We conclude with some examples which illustrates some types of stability.

\section{The set $H(f)$}
\subsection{The first formula of $H(f)$}\quad\\
Let $f: \mathbb{R}^n \to \mathbb{R}$ be a polynomial function and $t \in \mathbb{R}$.
\begin{definition}[\cite{DHN, Ha}] We say that
\begin{enumerate}
\item[(i)] A sequence $\{x^k\} \subset \mathbb{R}^n$ is the first type of $[f \le t]$ if 
\begin{align*}
\|x^k\|&\to \infty,\\ 
f(x^k) > t, f(x^k)&\to t,\\
\exists \delta > 0 \ \text{s.t.}\ \dist(x^k, [f \le t])& \ge \delta.
\end{align*} 
\item[(ii)] A sequence $\{x^k\} \subset \mathbb{R}^n$ is the second type of $[f \le t]$ if 
\begin{align*}
\|x^k\|&\to \infty,\\ 
\exists M \in \mathbb{R}: t < f(x^k)& \le M < +\infty,\\
\dist(x^k, [f \le t])& \to +\infty.
\end{align*}
\end{enumerate}
\end{definition}
\begin{theorem}[\cite{Ha}]\label{Thm2.1}
The following statements are equivalent:
\begin{enumerate}
\item[(i)] There are no sequences of the first or second types of $[f \le t]$.
\item[(ii)] $[f \le t]$ has a GHEB, i.e. there exist $\alpha, \beta, c > 0$ such that 
$$[f(x) - t]_+^\alpha + [f(x) - t]_+^\beta \ge c \dist(x, [ f \le t])\ \text{for all}\ x \in \mathbb{R}^n.$$
\end{enumerate}
\end{theorem}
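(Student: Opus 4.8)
The plan is to prove the two implications separately; $(ii)\Rightarrow(i)$ is elementary, while $(i)\Rightarrow(ii)$ is the substantial part. For $(ii)\Rightarrow(i)$, assume a GHEB holds with exponents $\alpha,\beta$ and constant $c$. If $\{x^k\}$ were a sequence of the first type, then $f(x^k)\to t^+$ forces $[f(x^k)-t]_+^\alpha+[f(x^k)-t]_+^\beta\to 0$, whereas the right-hand side of the GHEB inequality stays $\ge c\delta>0$, a contradiction. If $\{x^k\}$ were a sequence of the second type, then $t<f(x^k)\le M$ keeps the left-hand side bounded by $(M-t)^\alpha+(M-t)^\beta$, whereas $c\,\dist(x^k,[f\le t])\to+\infty$, again a contradiction. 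Hence $(ii)$ precludes both types.

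For $(i)\Rightarrow(ii)$ the idea is to reduce the pointwise inequality to a one-variable estimate. I would introduce
$$\phi(r):=\sup\{\,\dist(x,[f\le t])\ :\ x\in\mathbb{R}^n,\ t<f(x)\le t+r\,\},\qquad r>0,$$
with the convention $\phi(r)=0$ when the indexing set is empty. First, $\phi(r)$ is finite for every $r$: otherwise there is a sequence $\{x^k\}$ with $t<f(x^k)\le t+r$ and $\dist(x^k,[f\le t])\to+\infty$, and since $[f\le t]$ is nonempty and closed this forces $\|x^k\|\to\infty$, producing a sequence of the second type, contrary to $(i)$. Next, $\phi$ is nondecreasing by construction, and it is semialgebraic, since its graph is cut out by a first-order formula over the semialgebraic sets $[f\le t]$ and $\{f>t\}$ and Tarski--Seidenberg applies. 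Finally, $\phi(r)\to 0$ as $r\to 0^+$: if $\limsup_{r\to0^+}\phi(r)=2\delta>0$, choose $r_k\to0^+$ and $x^k$ with $t<f(x^k)\le t+r_k$ and $\dist(x^k,[f\le t])>\delta$; then $f(x^k)\to t^+$, and $\{x^k\}$ cannot have a bounded subsequence because any limit point would lie in $[f\le t]$ by continuity of $f$ yet have distance $\ge\delta$ to it, so $\|x^k\|\to\infty$ and $\{x^k\}$ is of the first type, contradicting $(i)$.

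I would then invoke the structure of semialgebraic functions of one variable at the two endpoints. Since $\phi$ is semialgebraic, nondecreasing, finite, and tends to $0$ at $0^+$, either $\phi$ vanishes near $0$ or $\phi(r)=c_0 r^{a}(1+o(1))$ as $r\to0^+$ for some rational $a>0$ and $c_0>0$; in either case $\phi(r)\le C_1 r^{\alpha}$ on $(0,1]$ for a suitable $\alpha\in(0,1]$ and $C_1>0$, using boundedness of $\phi$ on a middle range $[r_0,1]$. Likewise, as $r\to+\infty$ either $\phi$ has a finite limit or $\phi(r)=c_\infty r^{q}(1+o(1))$ for some rational $q>0$, so $\phi(r)\le C_2 r^{\beta}$ on $[1,\infty)$ for a suitable $\beta\ge1$ and $C_2>0$. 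Combining, $\phi(r)\le \max\{C_1,C_2\}(r^{\alpha}+r^{\beta})$ for all $r>0$. Hence for any $x$ with $f(x)>t$, putting $r=f(x)-t=[f(x)-t]_+$ gives $\dist(x,[f\le t])\le\phi(r)\le\max\{C_1,C_2\}\bigl([f(x)-t]_+^{\alpha}+[f(x)-t]_+^{\beta}\bigr)$, while for $f(x)\le t$ both sides of the GHEB inequality vanish; this is $(ii)$ with $c=1/\max\{C_1,C_2\}$.

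The main obstacle I expect is the rigorous treatment of $\phi$: verifying that the universally quantified defining condition really yields a semialgebraic graph, and, more importantly, converting the qualitative facts ``$\phi(r)\to0$ at $0$'' and ``$\phi$ nondecreasing'' into the genuine power bounds $\phi(r)\le C_1 r^\alpha$ and $\phi(r)\le C_2 r^\beta$. This relies on the Puiseux-type asymptotics of one-variable semialgebraic functions near a boundary point and at infinity, and one must be careful about the intermediate range of $r$ where only boundedness is available. An alternative route avoiding $\phi$ would argue by contradiction directly: if no triple $\alpha,\beta,c$ works, extract a sequence $\{x^k\}$ along which $\dist(x^k,[f\le t])$ dominates every $[f(x^k)-t]_+^\alpha+[f(x^k)-t]_+^\beta$, apply a curve selection lemma at infinity to replace it by a Puiseux arc, and read off from the arc's asymptotics that it must be a sequence of the first or of the second type; this trades the analysis of $\phi$ for the machinery of curve selection at infinity.
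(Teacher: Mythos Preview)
The paper does not prove this theorem; it is quoted from \cite{Ha} and used as a black box, so there is no ``paper's own proof'' to compare against directly. That said, your approach is correct and is in fact essentially the method the paper employs to prove the closely analogous Theorem~\ref{Thm2.5} (the version restricted to $V_1$): there the authors introduce the function
\[
\psi(\tau)=\sup_{[f(x)-t]_+=\tau,\ x\in V_1}\dist(x,[f\le t]),
\]
observe by Tarski--Seidenberg that it is semialgebraic, and use its Puiseux expansions at $0$ and at $+\infty$ to extract the two exponents. Your $\phi(r)$ is the unrestricted analogue, and your handling of the two regimes, the monotonicity, and the middle range is the same in spirit. The only cosmetic difference is that the paper takes the supremum over the \emph{level} set $[f-t]_+=\tau$ rather than the sublevel $[f-t]_+\le r$; your choice has the advantage of being manifestly monotone, which slightly simplifies the patching argument.

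One small point to tighten: when you write ``$\phi(r)\le C_1 r^{\alpha}$ on $(0,1]$ for a suitable $\alpha\in(0,1]$'', there is no need to force $\alpha\le 1$ (any positive exponent suffices for the GHEB), and the patching on the middle range really uses monotonicity in the form $\phi(r)\le\phi(1)\le(\phi(1)/r_0^{\alpha})\,r^{\alpha}$ for $r\in[r_0,1]$; it is worth stating this explicitly rather than leaving it to the phrase ``using boundedness''. The alternative curve-selection route you sketch at the end is also viable and is closer to how results of this type are sometimes proved in the o-minimal literature, but the $\phi$-based argument is cleaner here and matches what the cited source and the present paper do.
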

 

Put \begin{align*}
F^1(f) &= \{t \in \mathbb{R}:\exists \{x^k\} \subset \mathbb{R}^n,\{x^k\}\ \text{is a sequence of the first type of}\ [f \le t]\},\\
F^2(f) &= \{t \in \mathbb{R}:\exists \{x^k\} \subset \mathbb{R}^n, \{x^k\}\ \text{is a sequence of the second type of}\ [f \le t]\}.
\end{align*}
\begin{definition}
	Put $$h(f) = \begin{cases}
	\inf f\ \text{ if }\ F^2(f) = \{\inf f\}\ \text{or}\ F^2(f) = \emptyset,\\
	+\infty\ \text{ if }\ F^2(f) = \mathbb{R},\\
	\sup\{t \in \mathbb{R}: t \in F^2(f) \}\ \text{ if }\ F^2(f) \ne \emptyset\ \text{and}\ F^2(f) \ne \mathbb{R}.  
	\end{cases}$$
	We call $h(f)$ the {\em threshold} of global H\"{o}lderian error bounds of $f$.
\end{definition}
\begin{theorem}[The first formula of $H(f)$]\label{Main}
	We have 
	\begin{enumerate}
		\item[(i)] If $h(f) = \inf f$, then $H(f) = [\inf f, +\infty) \setminus F^1(f)$;
		\item[(ii)] If $h(f) = +\infty$, then $H(f) = \emptyset$;
		\item[(iii)] If $h(f) \in F^2(f)$, then $H(f) = (h(f), +\infty) \setminus F^1(f)$;
		\item[(iv)] If $h(f) \notin F^2(f)$, then $H(f) = [h(f), +\infty) \setminus F^1(f)$. 
	\end{enumerate}
\end{theorem}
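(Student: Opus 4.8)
The plan is to derive everything from the criterion of Theorem~\ref{Thm2.1}, which says precisely that
$$H(f)=E(f)\setminus\bigl(F^1(f)\cup F^2(f)\bigr),\qquad E(f):=\{t\in\mathbb{R}:[f\le t]\neq\emptyset\},$$
and then to make the two factors on the right-hand side explicit. First I would record that $E(f)=[\inf f,+\infty)$ if $\inf f$ is attained and $E(f)=(\inf f,+\infty)$ otherwise, and that $F^2(f)$ is \emph{downward closed}: if $t\in F^2(f)$ and $t'\le t$, then $[f\le t']\subseteq[f\le t]$, hence $\dist(x,[f\le t'])\ge\dist(x,[f\le t])$ for every $x$, so any sequence of the second type of $[f\le t]$ is also one of $[f\le t']$ (with the convention $\dist(x,\emptyset)=+\infty$). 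Moreover, for $t<\inf f$ we have $[f\le t]=\emptyset$, so a second-type sequence of $[f\le t]$ is nothing but an unbounded sequence along which $f$ is bounded above; therefore $F^2(f)\supseteq(-\infty,\inf f)$ if $f$ is not proper, and $F^2(f)=\emptyset$ if $f$ is proper (by downward closedness again). Putting this together, $F^2(f)$ is one of $\emptyset$, $(-\infty,h)$, $(-\infty,h]$ with $h\ge\inf f$, or $\mathbb{R}$, and whenever $F^2(f)\notin\{\emptyset,\mathbb{R}\}$ one has $h(f)=\sup F^2(f)\ge\inf f$.

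The delicate point is the boundary value $\inf f$, which I would treat with two lemmas. (a) If $\inf f$ is finite and not attained, then a minimizing sequence is unbounded, has $f$-values strictly above $\inf f$ tending to it, and has infinite distance to $[f\le\inf f]=\emptyset$; hence it is a sequence of the first type of $[f\le\inf f]$ and $\inf f\in F^1(f)$. (b) If $\inf f\in F^2(f)$ and $F^2(f)\cap(\inf f,+\infty)=\emptyset$, then still $\inf f\in F^1(f)$: pick a second-type sequence $\{x^k\}$ of $[f\le\inf f]$ and a subsequence with $f(x^k)\to\lambda$; if $\lambda=\inf f$ the sequence is already of the first type, whereas if $\lambda>\inf f$ then for each $t\in(\inf f,\lambda)$ the hypothesis $t\notin F^2(f)$ prevents $\{x^k\}$ from being a second-type sequence of $[f\le t]$, so $\dist(x^k,[f\le t])$ stays bounded along a subsequence, producing points $z^k\in[f\le t]$ with $\|z^k\|\to\infty$ and $\dist(z^k,[f\le\inf f])\to\infty$; letting $t\downarrow\inf f$ and taking a diagonal subsequence of the $z^k$ yields a first-type sequence of $[f\le\inf f]$.

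It then remains to run the four cases, each of which is immediate from the two descriptions above. If $F^2(f)=\mathbb{R}$, i.e.\ $h(f)=+\infty$, then $H(f)=\emptyset$, which is (ii). If $F^2(f)=(-\infty,h(f)]$, i.e.\ $h(f)\in F^2(f)$ with $h(f)$ finite, then $E(f)\setminus F^2(f)=(h(f),+\infty)$ because $h(f)\ge\inf f$, which is (iii). If $F^2(f)=(-\infty,h(f))$ with $h(f)>\inf f$, then $E(f)\setminus F^2(f)=[h(f),+\infty)$, the substantive part of (iv). In the remaining situation $h(f)=\inf f$, i.e.\ $F^2(f)\subseteq(-\infty,\inf f]$ (with $F^2(f)=\emptyset$ when $\inf f=-\infty$), we get $E(f)\setminus F^2(f)=[\inf f,+\infty)$ except when $\inf f$ is finite and not attained or $F^2(f)=(-\infty,\inf f]$, in which two cases $E(f)\setminus F^2(f)=(\inf f,+\infty)$; but then Lemma~(a), respectively Lemma~(b), gives $\inf f\in F^1(f)$, so that $(\inf f,+\infty)\setminus F^1(f)=[\inf f,+\infty)\setminus F^1(f)$. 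Hence $H(f)=[\inf f,+\infty)\setminus F^1(f)$, which is (i) together with the remaining instances of (iv).

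The hard part will be Lemma~(b). Since $[f\le\inf f]$ (the set of global minimizers) may be unbounded and the sublevel sets $[f\le t]$ for $t$ just above $\inf f$ need not lie in a uniform tube around it, one cannot pass from $\dist(x^k,[f\le\inf f])\to\infty$ to $\dist(x^k,[f\le t])\to\infty$ for free; the proof must use the alternative that either the given sequence already works at level $t$, which is excluded by the hypothesis $F^2(f)\cap(\inf f,+\infty)=\emptyset$, or it approaches $[f\le t]$ boundedly, which forces new points of $[f\le t]$ escaping to infinity while staying far from $[f\le\inf f]$, and then glue these together diagonally as $t\downarrow\inf f$. Making that extraction precise is the crux.
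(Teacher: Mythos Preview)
Your approach is the same as the paper's: both reduce to Theorem~\ref{Thm2.1} and the observation that $F^2(f)$ is downward closed, so that $F^2(f)$ is an interval with right endpoint $h(f)$, and the four cases fall out. The paper's proof is literally five lines and stops there.

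The difference is that you take seriously the boundary value $\inf f$, which the paper glosses over. In case~(i) the paper simply asserts $H(f)=[\inf f,+\infty)\setminus F^1(f)$, but as you note this requires showing that whenever $\inf f$ is excluded from $H(f)$ for a reason other than $F^1(f)$ --- namely, when $\inf f$ is not attained or when $F^2(f)=\{\inf f\}$ --- one still has $\inf f\in F^1(f)$, so the closed bracket is harmless. Your Lemmas~(a) and~(b) supply exactly this. Lemma~(a) is immediate; your sketch of Lemma~(b) via the alternative ``either the second-type sequence already converges to $\inf f$, or the hypothesis $t\notin F^2(f)$ for $t\in(\inf f,\lambda)$ forces nearby points in $[f\le t]$, and a diagonal extraction as $t\downarrow\inf f$ produces a first-type sequence'' is correct and the diagonal step goes through as you describe. (One small simplification: when $F^2(f)=\emptyset$ and $\inf f$ is finite, your own observation that a minimizing sequence for a non-attained infimum is automatically of second type shows $\inf f$ must be attained, so Lemma~(a) is only needed to cover the sub-case of~(b) where $\inf f$ is not attained, not as a separate case under $F^2(f)=\emptyset$.)

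In short, your proof is the paper's proof with the boundary analysis filled in; the paper's version is correct in spirit but does not justify why the interval in~(i) is closed at $\inf f$.
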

\begin{proof}
	Clearly, if $t \in F^2(f)$ and $\inf f \le t' \le t$, then $t' \in F^2(f)$. Hence,
	\begin{align*}
	\text{either}\ F^2(f) &= \emptyset,\\
	\text{or}\ F^2(f) &= \mathbb{R},\\
	\text{or}\ F^2(f) &= (\inf f, h(f)]\ \text{if}\ h(f) \in F^2(f),\\
	\text{or}\ F^2(f) &= (\inf f, h(f))\ \text{if}\ h(f) \notin F^2(f).
	\end{align*}
	Therefore, Theorem \ref{Main} follows from Theorem \ref{Thm2.1}.
\end{proof}
\subsection{A new criterion of the existence of a GHEB of $[f \le t]$ and the second formula of $H(f)$}\quad\\
Let $d$ be the degree of a polynomial $f$. By a linear change of coordinates, we can put $f$ in the form $$ f(x_1, \dots, x_n) = a_0x_n^d + a_1(x_1, \dots, x_{n-1})x_n^{d-1} + \dots + a_d(x_1, \dots, x_{n-1})\ (*), $$ where $a_0 \ne 0$ and $a_i(x_1, \dots, x_{n-1})$ are polynomials in $(x_1, \dots, x_{n-1})$, where degrees $\deg a_i\le i, i =1, \dots, d$.

Put $V_1 = \{x \in \mathbb{R}: \dfrac{\partial f}{\partial x_n}(x) = 0\}$. 
\begin{definition}\label{Def2.3}
	We say that 
	\begin{enumerate}
		\item[(i)] A sequence $\{x^k\}$ is of the first type of $[f \le t]$ w.r.t $V_1$ if \begin{align*}
		\|x^k\|& \to \infty,\\ 
		f(x^k) > t,& f(x^k) \to t,\\
		\dist(x^k, [f \le t]) &\ge \delta > 0,\\
		\text{and}\ \{x^k\} &\subset V_1.
		\end{align*} 
		\item[(ii)] A sequence $\{x^k\}$ is of the second type of $[f \le t]$ w.r.t $V_1$ if \begin{align*}
		\|x^k\|& \to \infty,\\ 
		t< f(x^k)&\le M < +\infty,\\
		\dist(x^k, [f \le t]) &\to \infty,\\
		\text{and}\ \{x^k\} &\subset V_1.
		\end{align*}
	\end{enumerate}
\end{definition}
Let $f$ be of the form $(*)$. Put
\begin{align*}
P^1(f) &= \{t \in \mathbb{R}: [f \le t]\ \text{has a sequence of the first type w.r.t. $V_1$}\};\\
P^2(f) &= \{t \in \mathbb{R}: [f \le t]\ \text{has a sequence of the second type w.r.t. $V_1$}\};\\
P(f) &= \{t \in \mathbb{R}: \exists \{x^k\} \subset\mathbb{R}^n, \|x^k\|\to\infty, \frac{\partial f}{\partial x_n}(x^k) = 0, f(x^k) \to t \}.
\end{align*}

\begin{theorem}\label{Thm2.5}
	Let $f$ be of the form $(*)$. Then the following statements are equivalent
	\begin{enumerate}
		\item[(i)] There are no sequences of the first or second types of $[f \le t]$ w.r.t $V_1$;
		\item[(ii)] $\exists \alpha_1, \beta_1, c > 0$ such that $$ [f(x) - t]_+^{\alpha_1} + [f(x) - t]_+^{\beta_1} \ge c_1 \dist(x, [ f \le t]), $$ for all $x \in V_1$;
		\item[(iii)] $\exists \alpha_1, \beta_1, c > 0$ such that $$ [f(x) - t]_+^{\alpha_1} + [f(x) - t]_+^{\beta_1} + [f(x) - t]_+^{\frac{1}{d}} \ge c \dist(x, [ f \le t]), $$ for all $x \in \mathbb{R}^n$;
		\item[(iv)] $[f \le t]$ has a global H\"{o}lderian error bound.
	\end{enumerate}
\end{theorem}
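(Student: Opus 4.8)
The plan is to prove the cycle $(ii)\Rightarrow(iii)\Rightarrow(iv)\Rightarrow(ii)$ together with the equivalence $(i)\Leftrightarrow(ii)$; only $(ii)\Rightarrow(iii)$ requires a genuinely new argument, everything else being formal or a relativization of Theorem \ref{Thm2.1}. The implication $(iv)\Rightarrow(iii)$ is immediate since the extra summand $[f(x)-t]_+^{1/d}$ is nonnegative, and $(iv)\Rightarrow(ii)$ is just the restriction of the global bound in $(iv)$ to the closed set $V_1$ (it already has the required two-term shape). For $(iii)\Rightarrow(iv)$ I would use the elementary ``power-collapsing'' fact: for finitely many exponents $\gamma_1,\dots,\gamma_k>0$ and any $a\ge 0$ one has $\sum_i a^{\gamma_i}\le k\,(a^{\gamma_{\min}}+a^{\gamma_{\max}})$, which follows by treating $a\le1$ and $a\ge1$ separately. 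Applied with $a=[f(x)-t]_+$ to the three summands of $(iii)$, this yields $(iv)$ with exponents $\min\{\alpha_1,\beta_1,1/d\}$ and $\max\{\alpha_1,\beta_1,1/d\}$; the same remark shows once and for all that an error bound involving any finite collection of powers of $[f(x)-t]_+$ is equivalent to one with exactly two. Finally, for $(i)\Leftrightarrow(ii)$: the easy half $(ii)\Rightarrow(i)$ is identical to the corresponding step of Theorem \ref{Thm2.1}, namely a first-type sequence in $V_1$ would send the left side of $(ii)$ to $0$ while $\dist(x^k,[f\le t])\ge\delta>0$, and a second-type sequence would keep it bounded while $\dist(x^k,[f\le t])\to\infty$; for $(i)\Rightarrow(ii)$ I would re-run the proof of Theorem \ref{Thm2.1} from \cite{Ha} with $\mathbb{R}^n$ replaced throughout by the closed semialgebraic set $V_1$, since that argument uses only semialgebraicity and curve-selection/{\L}ojasiewicz estimates that are valid over an arbitrary closed semialgebraic set.

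The heart of the matter is $(ii)\Rightarrow(iii)$, which I would obtain by a one-dimensional descent along the vertical lines $s\mapsto x+se_n$, $e_n=(0,\dots,0,1)$. Fix $x$ with $u:=f(x)-t>0$ (if $f(x)\le t$ the inequality in $(iii)$ is trivial) and set $g(s):=f(x+se_n)$; by the normal form $(*)$, $g$ is a polynomial of degree $d$ in $s$ whose leading coefficient is the constant $a_0\ne0$, uniformly in $x$. If $x\in V_1$ we are done by $(ii)$; otherwise, after possibly replacing $e_n$ by $-e_n$, assume $g'(0)<0$ and let $s_\ast>0$ be the smallest positive $s$ with $g(s)=t$ or $g'(s)=0$. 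Such an $s_\ast$ exists and is finite, since otherwise $g$ would decrease strictly on $(0,\infty)$ while staying $\ge t$, forcing the nonconstant polynomial $g$ to have a finite limit at $+\infty$. On $[0,s_\ast]$ one has $t\le g(s)\le t+u$, so $g-t$ is a degree-$d$ polynomial with leading coefficient $a_0$ and sup-norm at most $u$ on an interval of length $s_\ast$; a Chebyshev-type inequality (equivalently, Lagrange interpolation at $d+1$ equally spaced nodes) then gives $s_\ast\le C\,u^{1/d}$ with $C=C(d,|a_0|)$ independent of $x$. If $g(s_\ast)=t$ then $x+s_\ast e_n\in[f\le t]$ and $\dist(x,[f\le t])\le s_\ast\le C u^{1/d}$; if instead $g'(s_\ast)=0$ then $z:=x+s_\ast e_n\in V_1$ with $0\le f(z)-t=g(s_\ast)-t\le u$, and combining $(ii)$ at $z$ with the triangle inequality,
$$\dist(x,[f\le t])\le s_\ast+\dist(z,[f\le t])\le C\,u^{1/d}+c_1^{-1}\bigl(u^{\alpha_1}+u^{\beta_1}\bigr).$$
Since $u=[f(x)-t]_+$, this is precisely the inequality asserted in $(iii)$ up to renaming constants, which closes the cycle.

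The main obstacle, and the only point where the normal form $(*)$ is really used, is the distance estimate $s_\ast\le C(d,|a_0|)\,u^{1/d}$: this is exactly the Chebyshev bound for polynomials of degree $d$ with a prescribed nonzero leading coefficient, and the constancy of $a_0$ in $(*)$ is what makes the constant uniform over all $x$ — it is also what pins the new error-bound exponent at $1/d$. Once that is in hand, the rest is bookkeeping: verifying that the downhill walk always terminates at a point of $\{g=t\}\cup\{g'=0\}$ within finite distance (so that no vertical line escapes to infinity undetected, which also covers the degenerate case $V_1=\emptyset$, possible only for $d$ odd), and checking that $(i)\Leftrightarrow(ii)$ is genuinely just Theorem \ref{Thm2.1} relativized to the closed set $V_1$.
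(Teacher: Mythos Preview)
Your proof is correct and follows the same overall architecture as the paper's (the cycle through $V_1$, with the decisive step being $(ii)\Rightarrow(iii)$ via one-dimensional slicing along $x_n$), but the specific tools differ in a way worth noting. For $(ii)\Rightarrow(iii)$ the paper invokes van der Corput's lemma to bound the measure of the sublevel set $\Sigma(x')=\{\tau:|f(x',\tau)-t|\le f(x)-t\}$, then observes that $x_n$ lies on the boundary of one of its component intervals $[a_1,b_1]$ and uses the dichotomy $u_{x'}(a_1)=\pm u_{x'}(b_1)$ (intermediate value theorem vs.\ Rolle) to locate a nearby point of $[f\le t]$ or of $V_1$. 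Your downhill walk to the first $s_\ast$ with $g(s_\ast)=t$ or $g'(s_\ast)=0$ reaches the same dichotomy more directly, and your use of the Chebyshev extremal inequality in place of van der Corput is more elementary --- it needs only that $g$ is a degree-$d$ polynomial with fixed leading coefficient $a_0$, not the general $C^d$ hypothesis --- while yielding the identical $u^{1/d}$ scaling. For $(iii)\Rightarrow(iv)$ the paper routes through Theorem~\ref{Thm2.1} (the three-term bound excludes first/second-type sequences, hence GHEB), whereas your power-collapsing observation $a^{\gamma_1}+\dots+a^{\gamma_k}\le k(a^{\gamma_{\min}}+a^{\gamma_{\max}})$ gives a one-line shortcut. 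Your handling of $(i)\Leftrightarrow(ii)$ as ``Theorem~\ref{Thm2.1} relativized to the closed semialgebraic set $V_1$'' is precisely what the paper does in its own $(i)\Rightarrow(ii)$ step, via the semialgebraic function $\psi(\tau)=\sup\{\dist(x,[f\le t]):x\in V_1,\ [f(x)-t]_+=\tau\}$.
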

\begin{proof}\quad\\
	We will prove that $(i) \Rightarrow (ii) \Rightarrow (iii) \Rightarrow (iv) \Rightarrow (i)$.\\
	Proof of $(i) \Rightarrow (ii):$\\
	For $\tau > 0$, put 
	$$\psi(\tau) := \begin{cases}
		0 &\text{if}\ [f(x) - t]_+=\tau\ \text{is empty}\\
		\sup\limits_{[f(x) - t]_+ = \tau, x \in V_1} \dist(x, [f \le t]) &\text{if}\ [f(x) - t]_+=\tau\ \text{is not empty}
	\end{cases}.$$
	By (i), $\psi(\tau)$ is well defined on $[0, +\infty)$. Moreover, it follows from Tarski-Seidenberg theorem (see, for example, \cite{BCR, C, HP}), $\psi(\tau)$ is a semialgebraic function.
	
	To prove (ii), it is important to know the behavior of $\psi(\tau)$, as $\tau  \to 0$ or $\tau \to +\infty$. We distinguish 4 possibilities
	\begin{enumerate}
		\item[(a)] $\psi(\tau) \equiv 0$ for $\tau$ sufficiently small and $\psi(\tau) \equiv 0$ for $\tau$ sufficiently large;
		\item[(b)] $\psi(\tau) \equiv 0$ for $\tau$ sufficiently small and $\psi(\tau) \not\equiv 0$ for $\tau$ sufficiently large;
		\item[(c)] $\psi(\tau) \not\equiv 0$ for $\tau$ sufficiently small and $\psi(\tau) \equiv 0$ for $\tau$ sufficiently large;
		\item[(d)] $\psi(\tau) \not\equiv 0$ both for $\tau$ sufficiently small and $\tau$ sufficiently large.
	\end{enumerate}
	We will prove (i) $\Rightarrow$ (ii) for the case (d) because the proofs of other cases are similar.
	 
	In this case, since $\psi(\tau)$ is semialgebraic and $\psi(\tau) \not\equiv 0$ for any $\tau \in [0, +\infty)$, we have 
	\begin{equation}\label{Eq1}
	\psi(\tau) = a_0\tau^{\tilde{\alpha}} + o(\tau^{\tilde{\alpha}})\ \text{as}\ \tau \to 0,\ \text{where}\ a_0 > 0.
	\end{equation}
	and \begin{equation}\label{Eq2}
	\psi(\tau) = b_0\tau^{\tilde{\beta}} + o(\tau^{\tilde{\beta}})\ \text{as}\ \tau \to +\infty,\ \text{where}\ b_0 > 0.
	\end{equation}
	Clearly, $\tilde{\alpha} > 0$. It follows from (\ref{Eq1}) that there exists $\delta > 0$ such that 
	\begin{equation}\label{Eq3}
	[f(x) - t]_+^{\frac{1}{\tilde{\alpha}}} \ge \frac{a_0}{2}\dist(x, [f \le t]),
	\end{equation}
	for $x \in \{x \in V_1: [f(x) - t]_+ \le \delta\}$.
	
	It follows from (\ref{Eq2}) that there exists $\Delta > 0$ sufficiently large, such that for any $x \in \{x \in V_1: [f(x) - t]_+ \ge \Delta\}$. We have
	\begin{equation}\label{Eq5}
	[f(x) - t]_+ \ge \frac{b_0}{2}\dist(x, [f \le t])
	\end{equation}
	if $\tilde{\beta} \le 0$ and 
	\begin{equation}\label{Eq4} 
	[f(x) - t]_+^{\frac{1}{\tilde{\beta}}} \ge \frac{b_0}{2}\dist(x, [f \le t]),
	\end{equation}
	if $\tilde{\beta} >0$.
		
	Since, by (i), there are no sequences of the second type, the function $\dist(x, [f \le t])$ is bounded on the set
	$$ \{x \in V_1: \delta \le [f(x) - t]_+ \le \Delta\}. $$
	
	This fact, together with (\ref{Eq3}), (\ref{Eq5}) and (\ref{Eq4}), give the proof of (i) $\Rightarrow$ (ii).
	
	\noindent Proof of (ii) $\Rightarrow$ (iii):\\
	The proof is based on the following classical result
	\begin{Lemma}[van der Corput, \cite{G}]
		Let $u(\tau)$ be a real valued $C^d$-function, $d \in \mathbb{N}$, that satisfies $|u^{(d)}(\tau)| \ge 1$ for all $\tau \in \mathbb{R}$. Then the following estimate is valid for all $\epsilon > 0$:
		$$ mes\{\tau \in \mathbb{R}: |u(\tau)| \le \epsilon\} \le (2e)( (d+1)! )^{1/d}\epsilon^{1/d}. $$
	\end{Lemma}
	Suppose that we have (ii). Then 
	\begin{itemize}
		\item If $x \in [f \le t]$, then $\dist(x, [f \le t]) = 0$ and (iii) holds automatically.
		\item If $x \in V_1$, then (iii) follows from (ii).
	\end{itemize}
    Assume that $x \notin [f \le t] \cup V_1$. 	
	
	Clearly
	\begin{itemize}
		\item (ii) holds if and only if there exists $c > 0$ such that 
		\begin{equation}\label{Eq7}
		[f(x) - t]_+ \ge c\min\{\dist(x, [f \le t])^{\frac{1}{\alpha_1}}, \dist(x, [f \le t])^{\frac{1}{\beta_1}}\}
		\end{equation} for all $x \in V_1$.
		\item (iii) holds if and only if there exists $c > 0$  
		\begin{equation}\label{Eq8}
		[f(x) - t]_+ \ge c\min\{\dist(x, [f \le t])^{\frac{1}{\alpha_1}}, \dist(x, [f \le t])^{\frac{1}{\beta_1}}, \dist(x, [f \le t])^{{d}}\}.
		\end{equation} for all $x \in \mathbb{R}^n$.
	\end{itemize}	
	Let $x = (x', x_n) \in \mathbb{R}^{n-1}\times \mathbb{R}, x' = (x_1, \dots, x_{n-1})$. We put $$ u_{x'}(\tau) = \frac{f(x', \tau) - t}{a_0 d!}, \tau \in \mathbb{R} $$ and $$ \Sigma(x') = \{\tau \in \mathbb{R}: |u_{x'}(\tau)| \le \frac{f(x) - t}{|a_0|d!} \}. $$
	Since $u_{x'}^{(d)}(\tau) = 1$, it follows from the van der Corput Lemma that there exists a constant $c > 0$, independent of $x$ such that 
	\begin{equation}\label{Eq9} 
	mes\Sigma(x') \le c(f(x) - t)^{1/d}.
	\end{equation}
	Clearly, $\Sigma(x') \ne \emptyset$ and $\Sigma(x') \ne \mathbb{R}$. Since $\Sigma(x')$ is a closed semi-algebraic subset of $\mathbb{R}$, we have $$ \Sigma(x') = \cup_{i=1}^m [a_i, b_i] \bigcup \cup_{j=1}^s \{c_j\}, $$ where $a_i, b_i, c_j \in \mathbb{R}, i=1, \dots, m; j = 1, \dots,s$, and $$ |u(a_i)| = |u(b_i)| = |u(c_j)| = \dfrac{f(x) - t}{|a_0|d!} .$$ 
	Firstly, we see that $x_n \ne c_j, \forall j = 1, \dots, s$. In fact, since $c_j$ is an isolated point of $\Sigma(x')$, $c_j$ is a local extremum of $u_{x'}(\tau)$. Hence, 	
	$$\frac{d u_{x'}}{d\tau}(c_j) = 0$$ or $\dfrac{\partial f}{\partial x_n}(x', c_j) = 0$ i.e. $(x', c_j) \in V_1$, while by assumption, $x = (x', x_n) \notin V_1$. Thus, $x_n \in \{a_i, b_i; i = 1, \dots, m\}$. 
	
	Without loss of generality, we may assume that $x_n = a_1$. Since $|u_{x'}(a_1)| = |u_{x'}(b_1)|$, we distinguish two cases
	\begin{itemize}
		\item If $u_{x'}(a_1) = -u_{x'}(b_1)$, then there exists $\tau_1 \in [a_1, b_1]$ such that $u_{x'}(\tau_1) = 0$, which means that $f(x', \tau_1) = t$ or $(x', \tau_1) \in f^{-1}(t) \subset [f \le t]$. Hence $$ \dist(x, [f \le t]) \le \dist(x, (x', \tau_1)) = |x_n - \tau_1| \le |a_1 - \tau_1| \le mes\Sigma(x'). $$ Then, by (\ref{Eq9}), (iii) holds.
		\item If $u_{x'}(a_1) = u_{x'}(b_1)$, then, by Rolle's Theorem, there exists $\tau_2 \in [a_1, b_1]$ such that $$ \dfrac{d u_{x'}}{d\tau}(\tau_2) = 0, $$ which means that $(x', \tau_2) \in V_1$. Applying (\ref{Eq7}), there exists $c_1 > 0$ such that 
		\begin{equation*}
		[f(x', \tau_2) - t]_+ \ge c_1 \min\{\dist((x', \tau_2), [f \le t] )^{1/\alpha_1}, \dist((x', \tau_2), [f \le t] )^{1/\beta_1}\}.
		\end{equation*}
		Moreover, since $\tau_2 \in \Sigma(x')$, we have
		\begin{equation}\label{Eq12}
		\begin{aligned}
		f(x) - t &\ge [f(x', \tau_2) - t]_+ \\
		&\ge c_1 \min\{\dist((x', \tau_2), [f \le t] )^{1/\alpha_1}, \dist((x', \tau_2), [f \le t] )^{1/\beta_1}\}.
		\end{aligned}
		\end{equation}
	Let $P(x', \tau_2)$ be the point of $[f \le t]$ such that $$ \dist((x',\tau_2), [f \le t] ) = \dist((x', \tau_2), P(x', \tau_2)). $$
	We have \begin{align*}
	\dist(x, [f \le t]) &\le \dist(x, P(x', \tau_2))\\
	& \le \dist(x, (x', \tau_2)) + \dist((x', \tau_2), P(x', \tau_2))\\
	& \le 2\max\{\dist(x, (x', \tau_2)), \dist((x', \tau_2), P(x', \tau_2))\}.
	\end{align*}
	Now:	
	\begin{itemize}
		\item If $\max\{\dist(x, (x', \tau_2)), \dist((x', \tau_2), P(x', \tau_2))\} = \dist(x, (x', \tau_2))$, then 
		$$ \dist(x, [f\le t]) \le 2\dist((x', \tau_2),x) \le 2mes\Sigma(x') \le 2c(f(x) - t)^{1/d}. $$
		\item If $\max\{\dist(x, (x', \tau_2)), \dist((x', \tau_2), P(x', \tau_2))\} = \dist((x', \tau_2), P(x', \tau_2))$, then 
		$$ \dist(x, [f \le t]) \le 2\dist((x', \tau_2), P(x', \tau_2)) \le 2\dist((x', \tau_2), [f \le t]). $$ 
		Then (iii) follows from (\ref{Eq12}).
	\end{itemize}
	Hence, the implication (ii) $\Rightarrow$ (iii) is proved.	
	\end{itemize}
	\noindent Proof of (iii) $\Rightarrow$ (iv):\\
	Clearly, if (iii) holds, then there are no sequences of the first or second types of $[f \le t]$. Hence, by Theorem \ref{Thm2.1}, (iv) holds.
	
	The proof of (iv) $\Rightarrow$ (i) is straightforward.
\end{proof}
\begin{proposition}\label{Changevar}
	Let $f: \mathbb{R}^n \to \mathbb{R}$ be a polynomial function and $A : \mathbb{R}^n \to \mathbb{R}^n$ be a linear isomorphism. Then we have $$ H(f \circ A) = H(f). $$  		 
\end{proposition}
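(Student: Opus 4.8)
The plan is to exploit the fact that a linear isomorphism $A$ is bi-Lipschitz, together with the identity $[f\circ A\le t]=A^{-1}([f\le t])$. First I would record the elementary two-sided bound for the operator norm $\|\cdot\|$: for every $v\in\mathbb{R}^n$,
$$\frac{1}{\|A^{-1}\|}\,\|v\|\ \le\ \|Av\|\ \le\ \|A\|\,\|v\|.$$
Since $A^{-1}([f\le t])=\{y:f(Ay)\le t\}=[f\circ A\le t]$, for any $y\in\mathbb{R}^n$ one has $\dist\bigl(y,[f\circ A\le t]\bigr)=\inf_{x\in[f\le t]}\|A^{-1}(Ay-x)\|$, and the displayed norm inequality immediately yields
$$\frac{1}{\|A\|}\,\dist\bigl(Ay,[f\le t]\bigr)\ \le\ \dist\bigl(y,[f\circ A\le t]\bigr)\ \le\ \|A^{-1}\|\,\dist\bigl(Ay,[f\le t]\bigr).$$

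Next I would transport the error bound. Suppose $t\in H(f)$, so there are $\alpha,\beta,c>0$ with $[f(x)-t]_+^\alpha+[f(x)-t]_+^\beta\ge c\,\dist(x,[f\le t])$ for all $x$; moreover $[f\le t]\ne\emptyset$, hence $[f\circ A\le t]=A^{-1}([f\le t])\ne\emptyset$. Putting $x=Ay$ (which ranges over all of $\mathbb{R}^n$ as $y$ does) and using the left inequality above,
$$[f(Ay)-t]_+^\alpha+[f(Ay)-t]_+^\beta\ \ge\ c\,\dist\bigl(Ay,[f\le t]\bigr)\ \ge\ \frac{c}{\|A^{-1}\|}\,\dist\bigl(y,[f\circ A\le t]\bigr)$$
for all $y\in\mathbb{R}^n$. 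Hence $t\in H(f\circ A)$, with the same exponents $\alpha,\beta$ and constant $c/\|A^{-1}\|>0$. Applying this to the pair $(f\circ A, A^{-1})$ in place of $(f,A)$ gives the reverse inclusion, whence $H(f\circ A)=H(f)$.

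An alternative, perhaps cleaner, route is to invoke Theorem \ref{Thm2.1}: a linear isomorphism carries a sequence of the first (resp. second) type of $[f\le t]$ to a sequence of the first (resp. second) type of $[f\circ A\le t]$ and conversely, because $\|x^k\|\to\infty\iff\|A^{-1}x^k\|\to\infty$, the distance conditions are preserved up to the bounded factors above, and the values are literally unchanged: $(f\circ A)(A^{-1}x^k)=f(x^k)$. Thus $[f\le t]$ admits no such sequence iff $[f\circ A\le t]$ admits none, and Theorem \ref{Thm2.1} concludes. There is no serious obstacle in either approach; the only point requiring a little care is that the distance to the sublevel set changes only by factors bounded above and below by constants depending on $A$ alone (not on the point) — exactly what the two-sided operator-norm inequality provides — together with disposing of the degenerate case $[f\le t]=\emptyset$, in which $t$ lies in neither $H(f)$ nor $H(f\circ A)$.
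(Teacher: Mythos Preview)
Your proof is correct and follows essentially the same approach as the paper's: both exploit that a linear isomorphism is bi-Lipschitz to compare $\dist(y,[f\circ A\le t])$ with $\dist(Ay,[f\le t])$ and then transport the H\"olderian error bound, with the reverse inclusion obtained by symmetry. One minor slip: where you write ``using the left inequality above,'' it is in fact the right-hand inequality $\dist(y,[f\circ A\le t])\le\|A^{-1}\|\,\dist(Ay,[f\le t])$ that yields the needed bound $\dist(Ay,[f\le t])\ge\|A^{-1}\|^{-1}\dist(y,[f\circ A\le t])$.
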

		\begin{proof}
			Let $y = Ax$ and put $g = f \circ A$. 
			
			Firstly, we prove that $t_0 \in H(g) \Rightarrow t_0 \in H(f)$.
			
			We have $f(y) = f(A\circ A^{-1}(y)) = g(A^{-1}(y))$.
			This implies that 
			\begin{equation}\label{fact15}
			[f(y) - t_0]_+^\alpha + [f(y) - t_0]_+^\beta = [g(A^{-1}(y)) - t_0]_+^\alpha + [g(A^{-1}(y)) - t_0]_+^\beta.
			\end{equation}

			Since $t_0 \in H(g)$, then there exists $\alpha, \beta, c > 0$ such that \begin{equation}\label{fact16}
			[g(A^{-1}(y)) - t_0]_+^\alpha + [g(A^{-1}(y)) - t_0]_+^\beta \ge c\dist(A^{-1}(y), [g \le t_0]).
			\end{equation} 
			Suppose that $\dist(A^{-1}(y), [g \le t_0]) = \|A^{-1}(y) - x_0\|$, where $g(x_0) = t_0$ or $f(A(x_0)) = t_0$. Since $y_0 = Ax_0$ and $A$ is a linear isomorphism, we have $f(y_0) = t_0$ and there exists $c' > 0$ such that 
			$$ c'\|y - y_0\| \ge \|A^{-1}(y) - A^{-1}(y_0)\| \ge \frac{1}{c'}\|y - y_0\|. $$
			It follows that $$ \dist(A^{-1}(y), [g \le t_0]) = \|A^{-1}(y) - A^{-1}(y_0)\| \ge \frac{1}{c'}\|y - y_0\| \ge \frac{1}{c'}\dist(y, [f \le t_0]). $$
			Combining (\ref{fact15}), (\ref{fact16}) and above fact, we have $$ 	[f(y) - t_0]_+^\alpha + [f(y) - t_0]_+^\beta \ge \frac{c}{c'}\dist(y, [f \le t_0]), \forall y \in \mathbb{R}^n,$$ 
			i.e., $t_0 \in H(f)$. The claim $t_0 \in H(f) \Rightarrow t_0 \in H(g)$ is proved similarly.
		\end{proof}
We have the following theorem
\begin{theorem}[The second formula of $H(f)$]\label{second}
	Let $f$ be a polynomial of the form $(*)$. Then we have
	\begin{enumerate}
		\item[(i)] $h(f) = \sup\{t \in \mathbb{R}: t \in P^2(f)\}$;
		\item[(ii)] If $h(f) = \inf f$, then $H(f) = [\inf f, +\infty) \setminus P^1(f)$;
		\item[(iii)] If $h(f) = +\infty$, then $H(f) = \emptyset$;
		\item[(iv)] If $h(f) \in \mathbb{R}$ and $h(f) \in P^2(f)$, then $H(f) = (h(f), +\infty) \setminus P^1(f)$;
		\item[(v)] If $h(f) \in \mathbb{R}$ and $h(f) \notin P^2(f)$, then $H(f) = [h(f), +\infty) \setminus P^1(f)$. 
	\end{enumerate}
\end{theorem}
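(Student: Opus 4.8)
The plan is to rerun, with Theorem~\ref{Thm2.5} in place of Theorem~\ref{Thm2.1}, the very argument by which Theorem~\ref{Main} was obtained from Theorem~\ref{Thm2.1}. The equivalence $(i)\Leftrightarrow(iv)$ of Theorem~\ref{Thm2.5} says that for every $t$ with $[f\le t]\neq\emptyset$ one has $t\in H(f)$ if and only if $t\notin P^1(f)\cup P^2(f)$; hence
$$H(f)=\{t\in\mathbb R:[f\le t]\neq\emptyset\}\setminus\bigl(P^1(f)\cup P^2(f)\bigr),$$
the convention $\dist(x,\emptyset)=+\infty$ being used, as for Theorem~\ref{Main}. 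Granting part~$(i)$, the statements $(ii)$--$(v)$ are then pure bookkeeping, identical to the proof of Theorem~\ref{Main}. First, $P^2(f)$ is downward closed: if $\{x^k\}$ is of the second type of $[f\le t]$ w.r.t.\ $V_1$ and $\inf f\le t'\le t$, then $\{x^k\}$ is of the second type of $[f\le t']$ w.r.t.\ $V_1$ as well, because $t'<f(x^k)\le M$ and $[f\le t']\subseteq[f\le t]$ give $\dist(x^k,[f\le t'])\ge\dist(x^k,[f\le t])\to\infty$. Consequently, exactly as for $F^2(f)$ in the proof of Theorem~\ref{Main}, $P^2(f)$ is one of $\emptyset$, $\mathbb R$, $(\inf f,h(f)]$, $(\inf f,h(f))$, the right endpoint being present precisely when $h(f)\in P^2(f)$. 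Substituting this, together with $h(f)=\sup P^2(f)$ from $(i)$, into the displayed formula for $H(f)$ gives $(ii)$--$(v)$, with $P^1(f)$ occupying the place held by $F^1(f)$ in Theorem~\ref{Main}.

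So everything reduces to part~$(i)$. One inequality is free: a sequence of the second type w.r.t.\ $V_1$ is in particular a sequence of the second type, so $P^2(f)\subseteq F^2(f)$; comparing with the definition of $h(f)$ and checking the degenerate cases $F^2(f)\in\{\emptyset,\{\inf f\},\mathbb R\}$ directly yields $\sup P^2(f)\le h(f)$ (with $\sup\emptyset$ read as $\inf f$ when $P^2(f)=\emptyset$). The content of $(i)$ is the reverse inequality, and for it, it suffices to prove that $F^2(f)\subseteq P^2(f)$, i.e.\ that $F^2(f)=P^2(f)$; then $\sup P^2(f)=\sup F^2(f)=h(f)$.

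To prove $F^2(f)\subseteq P^2(f)$, take $t\in F^2(f)$, witnessed by $\{x^k\}$, $x^k=((x^k)',x_n^k)\in\mathbb R^{n-1}\times\mathbb R$, of the second type of $[f\le t]$, and put $R_k:=\dist(x^k,[f\le t])\to+\infty$; then $f>t$ on the open ball $B(x^k,R_k)$, and $\|(x^k)'\|\to\infty$ (otherwise the normal form $(*)$, whose leading coefficient $a_0$ is nonzero, would force $|f(x^k)|\to\infty$, against $f(x^k)\le M$). The idea is to slide $x^k$ a bounded multiple of $R_k$ along the $x_n$-direction until it meets $V_1$, keeping the value of $f$ in $(t,M]$. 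Since $f$ is of the form $(*)$, the one-variable polynomial $\phi_k(s):=f((x^k)',s)$ has degree $d$ and leading coefficient $a_0\neq0$, so its critical points are the at most $d-1$ real roots of $\phi_k'=\frac{\partial f}{\partial x_n}((x^k)',\cdot)$, and $\phi_k>t$ on $(x_n^k-R_k,x_n^k+R_k)$. The key step is: for $k$ large, $\phi_k$ attains its minimum over a slab $[x_n^k-cR_k,x_n^k+cR_k]$ (a fixed $c\in(0,1)$) at an interior --- hence critical --- point $\zeta_k$. An endpoint minimum would make $\phi_k$ essentially monotone over an interval whose length tends to infinity, while $\phi_k(x_n^k)=f(x^k)\le M$ and $\phi_k>t$ throughout it; this is impossible as $R_k\to\infty$ because $\phi_k$ has fixed degree and fixed nonzero leading coefficient. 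Granting the key step, $((x^k)',\zeta_k)\in V_1$, $t<\phi_k(\zeta_k)=f((x^k)',\zeta_k)\le M$, $\|((x^k)',\zeta_k)\|\to\infty$, and
$$\dist\bigl(((x^k)',\zeta_k),[f\le t]\bigr)\ \ge\ \dist(x^k,[f\le t])-|x_n^k-\zeta_k|\ \ge\ R_k-cR_k\ \longrightarrow\ +\infty,$$
so $\{((x^k)',\zeta_k)\}$ is of the second type of $[f\le t]$ w.r.t.\ $V_1$, i.e.\ $t\in P^2(f)$. The main obstacle is precisely this sliding step --- producing a critical point of $\phi_k$ of controlled value within distance $cR_k$ of $x_n^k$; the delicate case is when $\phi_k$ has no real critical point near $x_n^k$, which, given $(*)$, can occur only for $d$ odd. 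In that case I would argue instead through the Curve Selection Lemma at infinity: for each $N$ the semialgebraic set $\{x:t<f(x)\le M,\ \dist(x,[f\le t])\ge N\}$ is unbounded, so there is a semialgebraic arc $\gamma(s)\to\infty$ with $f(\gamma(s))$ tending to a finite limit in $(t,M]$ and $\dist(\gamma(s),[f\le t])\to\infty$; by $(*)$ the roots of $\frac{\partial f}{\partial x_n}(\gamma'(s),\cdot)$ are Puiseux branches in $s$, and lifting $\gamma'$ through a suitable one of them produces an arc lying in $V_1$ with the same three properties, from which a sequence witnessing $t\in P^2(f)$ is read off.
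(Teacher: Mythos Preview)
The paper states this theorem without proof; the intended argument is indeed, as you outline, to rerun the proof of Theorem~\ref{Main} with Theorem~\ref{Thm2.5} in place of Theorem~\ref{Thm2.1}. Your reduction is sound: once $P^2(f)$ is seen to be downward closed and $H(f)=\{t:[f\le t]\ne\emptyset\}\setminus(P^1(f)\cup P^2(f))$ is read off from (i)$\Leftrightarrow$(iv) of Theorem~\ref{Thm2.5}, parts (ii)--(v) are bookkeeping, and everything rests on (i), for which it suffices to prove $F^2(f)\subseteq P^2(f)$.

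The gap is in your sliding step. The assertion that an endpoint minimum of $\phi_k$ on the slab $[x_n^k-cR_k,\,x_n^k+cR_k]$ forces $\phi_k$ to be ``essentially monotone'' on a long interval is not justified: an endpoint minimum on a compact interval says nothing about monotonicity, and nothing you have written rules it out. Your Curve Selection fallback has the same defect in disguise: lifting $\gamma'(s)$ through a real root of $\tfrac{\partial f}{\partial x_n}(\gamma'(s),\cdot)$ presupposes such a root exists, which is precisely the point at issue when $d$ is odd.

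The fix is already in the paper---reuse the van~der~Corput construction from the proof of (ii)$\Rightarrow$(iii) in Theorem~\ref{Thm2.5}. For $x^k\notin V_1$ with $t<f(x^k)\le M$, the set $\Sigma\bigl((x^k)'\bigr)=\{\tau:|f((x^k)',\tau)-t|\le f(x^k)-t\}$ has measure at most $C(M-t)^{1/d}$ by the van~der~Corput Lemma, and the dichotomy there produces, inside the component of $\Sigma\bigl((x^k)'\bigr)$ containing $x_n^k$, either a point of $f^{-1}(t)$ (Case~A) or, via Rolle, a point of $V_1$ (Case~B), in either case within distance $C(M-t)^{1/d}$ of $x^k$. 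Since $\dist(x^k,[f\le t])\to\infty$, Case~A is excluded for large $k$; and if the Case~B point $y^k=((x^k)',\tau_2^k)$ satisfied $f(y^k)\le t$, then again $\dist(x^k,[f\le t])\le|x_n^k-\tau_2^k|\le C(M-t)^{1/d}$, a contradiction. Hence for large $k$ one obtains $y^k\in V_1$ with $t<f(y^k)\le M$, $\|y^k\|\to\infty$ (since $\|(x^k)'\|\to\infty$, as you correctly note), and $\dist(y^k,[f\le t])\ge\dist(x^k,[f\le t])-C(M-t)^{1/d}\to\infty$. This yields $t\in P^2(f)$ with a \emph{uniform} displacement bound (independent of $k$) rather than one proportional to $R_k$, and no parity-of-$d$ case split is needed.
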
		
\section{The relationship between $H(f)$ and Fedoryuk values}
The relationship between Fedoryuk values and the existence of global H\"{o}lderian error bounds is well-known and has been explored in many previous works, see, for example, \cite{Az, CM, LP, Ha, I}. In this section, we will establish this relationship by proving that $h(f) \in F(f) \cup \{\pm \infty\}$ and $F^1(f) \subset F(f)$. We recall
\begin{definition}
	Let $f : \mathbb{R}^n \to \mathbb{R}$ be a polynomial function. The set {\em of Fedoryuk values} of $f$ is defined by $$ F(f):=\{t \in \mathbb{R} : \exists \{x^k\} \subset \mathbb{R}^n, \|x^k\| \to \infty, \|\nabla f(x^k)\| \to 0, f(x^k) \to t\}. $$
\end{definition}
Moreover, we have 
\begin{lemma}\label{Lemma3.1}
	$F(f)$ is a semialgebraic subset of $\mathbb{R}$.
\end{lemma}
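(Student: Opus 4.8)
The plan is to realize $F(f)$ as the set of real numbers $t$ satisfying a prenex first-order formula over the ordered field $\mathbb{R}$, and then to invoke the Tarski--Seidenberg theorem (already used in the proof of Theorem \ref{Thm2.5}), which guarantees that such a set is semialgebraic.

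The one non-formal step is to rewrite the defining condition of $F(f)$ so as to eliminate the sequential/limit quantifiers in favour of a single scale parameter subject only to polynomial constraints. Concretely, I would prove the equivalence: $t\in F(f)$ if and only if
\begin{equation*}
\forall s>0\ \ \exists x\in\mathbb{R}^n:\quad s\|x\|^2\ge 1,\qquad \|\nabla f(x)\|^2\le s,\qquad (f(x)-t)^2\le s. \tag{$\star$}
\end{equation*}
The implication ``$(\star)\Rightarrow t\in F(f)$'' is obtained by a diagonal argument: applying $(\star)$ with $s=1/k$ produces points $x^k$ with $\|x^k\|\ge\sqrt{k}\to\infty$, $\|\nabla f(x^k)\|\le 1/\sqrt{k}\to 0$ and $f(x^k)\to t$. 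Conversely, if $\{x^k\}$ witnesses $t\in F(f)$, then for any fixed $s>0$ all three inequalities in $(\star)$ hold for $x=x^k$ once $k$ is large enough (using $\|x^k\|\to\infty$, $\|\nabla f(x^k)\|\to 0$ and $f(x^k)\to t$ respectively), so a suitable $x$ exists.

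Once $(\star)$ is established, the conclusion is immediate: $\|\nabla f(x)\|^2=\sum_{i=1}^n\big(\tfrac{\partial f}{\partial x_i}(x)\big)^2$ is a polynomial in $x$, so each of $s\|x\|^2\ge 1$, $\|\nabla f(x)\|^2\le s$ and $(f(x)-t)^2\le s$ is a polynomial relation in the variables $(x_1,\dots,x_n,s,t)$. Hence $(\star)$ defines a first-order formula $\varphi(t)$ with the single free variable $t$, obtained from polynomial (in)equalities by conjunction and the quantifier block $\forall s\,\exists x$ (together with the side condition $s>0$). By the Tarski--Seidenberg theorem (see \cite{BCR}), $\{t\in\mathbb{R}:\varphi(t)\}=F(f)$ is semialgebraic.

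I expect the only delicate point to be the reformulation $(\star)$ itself, i.e. turning the conditions ``$\to 0$'' and ``$\to\infty$'' into genuinely algebraic data and verifying the diagonal argument; everything after that is a direct application of quantifier elimination. It is worth noting that the lemma cannot be strengthened to finiteness: as Subsection 4.3 shows, $F(f)$ may be infinite. Semialgebraicity is precisely what is used in the sequel, since a semialgebraic subset of $\mathbb{R}$ is necessarily a finite union of points and open intervals.
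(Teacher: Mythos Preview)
Your proof is correct and follows essentially the same approach as the paper: rewrite the sequential definition of $F(f)$ as a first-order formula in the language of ordered fields and invoke Tarski--Seidenberg. The only cosmetic difference is that the paper uses three separate scale parameters $\epsilon,\delta,R$ (with quantifier block $\forall\epsilon\,\exists\delta\,\forall R\,\exists x$) where you collapse everything into a single parameter $s$; your version is tidier and your verification of the equivalence $(\star)$ is more explicit than what the paper provides, but the underlying idea is identical.
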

\begin{remark}{\rm 
	It follows from Lemma \ref{Lemma3.1} that either $F(f)$ is empty or $F(f)$ is finite set or $F(f)$ is a union of finitely many points and intervals.
		
	Note that $F(f)$ can be an infinite set, for example (see \cite{Par}), if $f(x,y,z) = x + x^2y + x^4yz$, then $F(f) = \mathbb{R}$ and $F(f^2) = (0, +\infty)$ (see also \cite{KOS} and \cite{Sch}).
}\end{remark}
To prove the lemma, it is more convenient to use the logical formulation of the Tarski-Seidenberg Theorem. Let us to recall it.

A {\em first-order formula} is obtained as follows recursively (see, for example, \cite{BCR, C, HP})
\begin{enumerate}
	\item If $f \in \mathbb{R}[X_1, \dots, X_n]$, then $f=0$ and $f > 0$ are first-order formulas (with free variables $X=(X_1,\dots,X_n)$) and $\{x \in \mathbb{R}^n|f(x) = 0\}$ and $\{x \in \mathbb{R}^n|f(x) > 0\}$ are respectively the	subsets of $\mathbb{R}^n$ such that the formulas $f = 0$ and $f>0$ hold. 
	\item If $\Phi$ and $\Psi$ are first-order formulas, then $\Phi \vee \Psi$ (conjunction), $\Phi \wedge \Psi$ (disjunction) and $\lnot\Phi$ (negation) are also first-order formulas.
	\item If $\Phi$ is a formula and $X$ is a variable ranging over $\mathbb{R}$, then $ \exists X \Phi $ and $\forall X \Phi$ are first-order formulas.
\end{enumerate}
\begin{Theorem}[Logical formulation of the Tarski–Seidenberg Theorem \cite{BCR, C,  HP}]\label{Tarski}
	If $\Phi(X_1, \dots, X_n)$ is a first-order formula, then the set $$\{(x_1, \dots, x_n) \in \mathbb{R}^n :\Phi(x_1, \dots,x_n)\ \text{holds} \}$$ is semialgebraic.
\end{Theorem}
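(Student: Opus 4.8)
The plan is to rewrite the defining condition for $F(f)$, which is phrased in terms of sequences and limits, as a single first-order formula in the free variable $t$, and then to invoke Theorem \ref{Tarski}. The point is that although the literal definition quantifies over sequences, the three limit conditions can be captured by nested quantifiers over real thresholds. First I would establish the characterization: $t \in F(f)$ if and only if
$$ \forall \epsilon > 0,\ \forall R > 0,\ \exists x \in \mathbb{R}^n:\ \|x\|^2 > R^2\ \wedge\ \|\nabla f(x)\|^2 < \epsilon^2\ \wedge\ (f(x) - t)^2 < \epsilon^2. $$
The forward implication is immediate: given a sequence $\{x^k\}$ realizing $t \in F(f)$, for any prescribed $\epsilon, R > 0$ it suffices to take $x = x^k$ for $k$ large enough, since $\|x^k\| \to \infty$, $\|\nabla f(x^k)\| \to 0$ and $f(x^k) \to t$.

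For the converse I would run a diagonal construction: applying the condition with $\epsilon = 1/k$ and $R = k$ produces, for each $k$, a point $x^k$ satisfying $\|x^k\| > k$, $\|\nabla f(x^k)\| < 1/k$ and $|f(x^k) - t| < 1/k$. The resulting sequence then has $\|x^k\| \to \infty$, $\|\nabla f(x^k)\| \to 0$ and $f(x^k) \to t$, so it witnesses $t \in F(f)$. This settles the equivalence between the sequence formulation and the quantifier characterization.

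Next I would observe that every clause in the displayed condition is a polynomial (in)equality in the variables $(t, \epsilon, R, x_1, \dots, x_n)$: indeed $\|x\|^2 = \sum_i x_i^2$, $\|\nabla f(x)\|^2 = \sum_i (\partial f/\partial x_i)^2$ and $(f(x) - t)^2$ are all polynomials, while $\epsilon > 0$ and $R > 0$ are polynomial strict inequalities. Consequently the whole condition is captured by the first-order formula
$$ \Phi(t):\quad \forall \epsilon\, \forall R\, \Big[(\epsilon > 0 \wedge R > 0) \Rightarrow \exists x_1 \cdots \exists x_n\, \big(\|x\|^2 > R^2 \wedge \|\nabla f(x)\|^2 < \epsilon^2 \wedge (f(x) - t)^2 < \epsilon^2\big)\Big], $$
where the implication $A \Rightarrow B$ is read as $\lnot A \vee B$ and is therefore admissible under the recursive definition of first-order formulas. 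By Theorem \ref{Tarski}, the set $\{t \in \mathbb{R} : \Phi(t)\ \text{holds}\}$ is semialgebraic, and by the equivalence above this set is exactly $F(f)$, which proves the lemma.

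The step requiring the most care is the translation itself, namely verifying the equivalence between the sequence definition and the quantifier characterization in both directions; the converse in particular depends on the diagonal choice $\epsilon = 1/k,\ R = k$ genuinely forcing $\|x^k\| \to \infty$ together with the two vanishing limits. Once this reformulation is in place, the passage to a semialgebraic set is a mechanical application of Tarski–Seidenberg and presents no further difficulty.
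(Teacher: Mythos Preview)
Your write-up is not a proof of the stated theorem. The statement in question is the logical formulation of the Tarski--Seidenberg theorem itself: that the set defined by any first-order formula over the reals is semialgebraic. That is a deep foundational result (quantifier elimination for real closed fields), and the paper does not prove it; it is simply quoted from \cite{BCR, C, HP} and then applied. Your argument, by contrast, \emph{invokes} Tarski--Seidenberg as a black box in its final step, so it cannot be a proof of Tarski--Seidenberg.

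What you have actually written is a proof of Lemma~\ref{Lemma3.1}, namely that $F(f)$ is a semialgebraic subset of $\mathbb{R}$. Read that way, your argument is correct and follows the same route as the paper: express membership in $F(f)$ by a first-order formula in the single free variable $t$, then apply Tarski--Seidenberg. Your quantifier block
\[
\forall \epsilon>0\ \forall R>0\ \exists x:\ \|x\|^2>R^2 \wedge \|\nabla f(x)\|^2<\epsilon^2 \wedge (f(x)-t)^2<\epsilon^2
\]
is in fact slightly cleaner than the paper's version, which carries a separate threshold $\delta$ for the gradient; your diagonal argument for the equivalence with the sequential definition is also more explicit than what the paper writes. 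But the idea is identical, and neither version addresses the theorem you were asked to prove.
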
 
\begin{proof}[Proof of Lemma \ref{Lemma3.1}]
	We have 
	\begin{align*}
	F(f) = \{t \in \mathbb{R}| \forall \epsilon > 0, \exists \delta >0: \forall & R> 0, \exists x \in\mathbb{R}^n: \|x\|^2 \ge R^2,\\ & \|\nabla f(x)\|^2 \le \delta^2, |f(x) - t| \le \epsilon\}.
	\end{align*}
	It follows from above that the set $F(f)$ can be determined by a first-order formula, hence by the Tarski-Seidenberg Theorem, it is a semialgebraic subset of $\mathbb{R}$.
\end{proof}
The following proposition is contained implicitly in \cite[Proof of Theorem B]{Ha}.
\begin{proposition}\label{Prop2.2}
	$F^1(f) \subset F(f)$.
\end{proposition}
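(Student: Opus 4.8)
The plan is to take a value $t\in F^1(f)$ and manufacture a Fedoryuk sequence for it directly. By definition of the first type there are $\delta>0$ and points $x^k$ with $\|x^k\|\to\infty$, $f(x^k)>t$, $f(x^k)\to t$ and $\dist(x^k,[f\le t])\ge\delta$; write $\varepsilon_k:=f(x^k)-t>0$, so $\varepsilon_k\to 0$. If $\nabla f(x^k)=0$ for infinitely many $k$, those $x^k$ already witness $t\in F(f)$, so we may assume $\nabla f(x^k)\ne 0$ for every $k$. The key elementary observation is that the closed ball $\overline{B}(x^k,\delta/2)$ is disjoint from $[f\le t]$ — any of its points lies at distance $\le\delta/2<\delta\le\dist(x^k,[f\le t])$ from $x^k$ — hence $f>t$ on $\overline{B}(x^k,\delta/2)$.

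Next I would run the arc-length parametrized steepest descent $\dot\gamma_k=-\nabla f(\gamma_k)/\|\nabla f(\gamma_k)\|$, $\gamma_k(0)=x^k$, on its maximal interval $[0,\ell_k^\ast)$. Along it $\tfrac{d}{d\ell}f(\gamma_k(\ell))=-\|\nabla f(\gamma_k(\ell))\|\le 0$ and $\gamma_k$ is $1$-Lipschitz, so $\gamma_k(\ell)\in\overline{B}(x^k,\ell)$; consequently for $0\le\ell<\min(\ell_k^\ast,\delta/2)$ we have $\gamma_k(\ell)\in\overline{B}(x^k,\delta/2)$ and therefore $f(\gamma_k(\ell))\in(t,t+\varepsilon_k]$. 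Now two cases. If $\ell_k^\ast\le\delta/2$, then $\gamma_k$ (being $1$-Lipschitz and confined to a compact ball) extends continuously to a limit $y^k:=\lim_{\ell\to\ell_k^\ast}\gamma_k(\ell)\in\overline{B}(x^k,\delta/2)$, and maximality of $\ell_k^\ast$ forces $\nabla f(y^k)=0$, since the vector field $-\nabla f/\|\nabla f\|$ is $C^\infty$, hence locally Lipschitz, on $\{\nabla f\ne 0\}$, so a nonvanishing gradient at $y^k$ would permit continuation past $\ell_k^\ast$. If $\ell_k^\ast>\delta/2$, then integrating the displayed identity over $[0,\delta/2]$ and using $f(\gamma_k(\delta/2))>t$ gives
\[
\int_0^{\delta/2}\|\nabla f(\gamma_k(\ell))\|\,d\ell \;=\; f(x^k)-f(\gamma_k(\delta/2)) \;<\; \varepsilon_k,
\]
so by averaging there is $\ell_k\in[0,\delta/2]$ with $\|\nabla f(\gamma_k(\ell_k))\|<2\varepsilon_k/\delta$; set $y^k:=\gamma_k(\ell_k)$. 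In both cases $\|y^k-x^k\|\le\delta/2$, hence $\|y^k\|\to\infty$; $y^k\in\overline{B}(x^k,\delta/2)$ misses $[f\le t]$ while $f(y^k)=\lim f(\gamma_k(\ell))\in[t,t+\varepsilon_k]$, so in fact $f(y^k)\in(t,t+\varepsilon_k]$ and $f(y^k)\to t$; and $\|\nabla f(y^k)\|$ is $0$ in the first case and $<2\varepsilon_k/\delta\to 0$ in the second. Thus $\{y^k\}$ is a Fedoryuk sequence for $t$, i.e. $t\in F(f)$, which proves $F^1(f)\subset F(f)$.

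The delicate part is the control of the steepest-descent trajectory: that it is defined at all, that in finite arc-length it can only terminate by reaching a zero of $\nabla f$, and — most importantly — that we never need to follow it for arc-length exceeding $\delta/2$, which is exactly what keeps the produced near-critical points near infinity. This last point is precisely what the hypothesis $\dist(x^k,[f\le t])\ge\delta$ buys: inside the tube of radius $\delta/2$ about $x^k$ the function remains in the thin slab $(t,t+\varepsilon_k]$, so a short descent already forces the gradient either to shrink or to vanish. Equivalently, one may argue by contradiction: if $t\notin F(f)$ there exist $\varepsilon_0,m_0,R_0>0$ with $\|\nabla f\|\ge m_0$ on $\{\|x\|\ge R_0,\ |f(x)-t|\le\varepsilon_0\}$, and then for large $k$ the trajectory from $x^k$ decreases $f$ at rate $\ge m_0$ while staying in this region, hence meets $[f\le t]$ within arc-length $\varepsilon_k/m_0<\delta$, contradicting $\dist(x^k,[f\le t])\ge\delta$.
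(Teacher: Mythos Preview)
Your proof is correct, but it takes a different route from the paper's. The paper applies Ekeland's Variational Principle on the complete metric space $X=\{x:f(x)\ge t\}$: with $\epsilon_k=f(x^k)-t$ and $\lambda_k=\sqrt{\epsilon_k}$, Ekeland produces $y^k\in X$ with $f(y^k)\le f(x^k)$, $\|y^k-x^k\|\le\lambda_k$, and the variational inequality $f(x)\ge f(y^k)-\sqrt{\epsilon_k}\,\|x-y^k\|$ for all $x\in X$; since $\lambda_k\to 0$ and $\dist(x^k,[f\le t])\ge\delta$, the ball $B(y^k,\delta/2)$ lies in $X$, so the variational inequality yields $\|\nabla f(y^k)\|\le\sqrt{\epsilon_k}$ directly. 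Your argument replaces Ekeland by the normalized gradient flow, exploiting that $f$ is smooth: you follow steepest descent for arc-length at most $\delta/2$, which keeps you inside the ball where $f>t$, and then either the flow hits a critical point or the integral identity $\int_0^{\delta/2}\|\nabla f(\gamma_k)\|\,d\ell<\varepsilon_k$ forces a point of small gradient by averaging. Both arguments hinge on the same geometric fact---the $\delta$-tube around $x^k$ avoids $[f\le t]$, so a short descent suffices---but the paper's approach is softer (it would work for, say, lower semicontinuous $f$ on a complete metric space), while yours is more elementary and self-contained for smooth $f$, avoiding any appeal to Ekeland. Your contrapositive reformulation at the end is also a clean way to see the same mechanism.
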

\begin{proof}
	Put $X = \{x \in \mathbb{R}^n : f(x) \ge t\}$. By the metric induced from that of $\mathbb{R}^n$, $X$ is a complete metric space and the function $f: X \to \mathbb{R}$ is bounded from below. Let $t \in F^1(f)$ and $\{x^k\}$ be a sequence of the first type of $[f \le t]$: \begin{align*}
	\|x^k\|&\to \infty,\\ 
	f(x^k)& > t,\\
	f(x^k)&\to t,\\
	\exists \delta > 0 \ \text{s.t.}\ \dist(x^k, [f \le t])& \ge \delta.
	\end{align*} 
	Let $\epsilon_k = f(x^k) - t$. Then $\epsilon_k > 0$ and $\epsilon_k \to 0$ as $k \to +\infty$. Set $\lambda_k = \sqrt{\epsilon_k}$. By the Ekeland's Variational Principle (\cite{E}), there exists a sequence $\{y^k\} \subset X$ such that 
	\begin{align*}
	f(y^k)&\le t + \epsilon_k = f(x^k),\\ 
	\dist(y^k, x^k)&\le \lambda_k
	\end{align*}
	and for any $x \in X, x \ne y^k$, we have \begin{equation}\label{Eq13}
	f(x) \ge f(y^k) - \dfrac{\epsilon_k}{\lambda_k}d(x,y^k), \forall x \in X.
	\end{equation} 
	Since $\dist(y^k, x^k) \le \lambda_k = \sqrt{\epsilon_k} \to 0$ and $\dist(x^k, [f \le t]) \ge \delta > 0$, the ball $B(y^k, \delta/2) = \{x \in \mathbb{R}^n: \dist(y^k, x) \le \delta/2\}$ is contained in $X$. Then, inequality (\ref{Eq13}) implies that $$\dfrac{f(y^k + \tau u) - f(y^k)}{\tau} \ge -\sqrt{\epsilon_k}$$ holds true for every $u \in \mathbb{R}^n, \|u\| = 1$ and $\tau \in [0, \delta/2)$. This gives us $$\langle \nabla f(y^k), u \rangle \ge - \sqrt{\epsilon_k}.$$ Putting $u = -\dfrac{\nabla f(y^k)}{\|\nabla f(y^k)\|}$, we get $\|\nabla f(y^k)\| \le \sqrt{\epsilon_k} \to 0$.  
	
	Clearly $f(y^k) \to t$. Therefore $t \in F(f)$.
\end{proof}
\begin{proposition}\label{Prop2.3}
	If there is a sequence of the second type of $[f \le t]$: \begin{align*}
	\|x^k\|&\to \infty,\\ 
	t < f(x^k)& \le M < +\infty,\\
	\dist(x^k, [f \le t])& \to +\infty.
	\end{align*} then there exists a sequence $\{y^k\}$ of the second type of $[f \le t]$: \begin{align*}
	\|y^k\|&\to \infty,\\ 
	t \le f(y^k)& \le M < +\infty,\\
	\dist(y^k, [f \le t])& \to +\infty.
	\end{align*} with additional conditions \begin{align*} 
	\|\nabla f(y^k)\|&\to 0,\\
	\text{and}\ \lim_{k \to \infty}f(y^k)&\in F(f).
	\end{align*}
	In particular, the segment $[t, M]$ contains at least one point of F(f).
\end{proposition}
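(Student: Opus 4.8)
The plan is to adapt the Ekeland-variational-principle argument used in the proof of Proposition~\ref{Prop2.2}, with the crucial modification that the perturbation radius must now be chosen to \emph{grow} rather than to shrink. Set $R_k = \dist(x^k, [f \le t])$ and $\epsilon_k = f(x^k) - t$; by hypothesis $R_k \to +\infty$ while $0 < \epsilon_k \le M - t$ stays bounded. On the closed (hence complete) set $X = \{x \in \mathbb{R}^n : f(x) \ge t\}$, on which $f$ is bounded below by $t$, I would apply Ekeland's Variational Principle at the point $x^k$ with accuracy $\epsilon_k$ and radius $\lambda_k := \sqrt{R_k}$. This yields $y^k \in X$ with $f(y^k) \le f(x^k) \le M$ (and $f(y^k) \ge t$ since $y^k \in X$), with $\dist(y^k, x^k) \le \lambda_k$, and with the variational inequality $f(x) \ge f(y^k) - \tfrac{\epsilon_k}{\lambda_k}\dist(x, y^k)$ for all $x \in X$.

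The point of taking $\lambda_k = \sqrt{R_k}$ is that it is simultaneously $\to +\infty$ and $o(R_k)$. The first property, together with $\dist(y^k, [f \le t]) \ge R_k - \dist(x^k, y^k) \ge R_k - \sqrt{R_k}$, gives $\rho_k := \dist(y^k, [f \le t]) \to +\infty$, which in turn forces $\|y^k\| \to +\infty$ (fix any $z_0 \in [f \le t]$ and use $\|y^k\| \ge \dist(y^k, [f \le t]) - \|z_0\|$). The second property is what makes the gradient estimate work: since $\rho_k \to +\infty$, the open ball $B(y^k, \rho_k)$ is disjoint from $[f \le t]$, hence contained in $\{f > t\} \subset X$, so $y^k$ lies in the interior of $X$. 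Plugging $x = y^k + \tau u$ with $\|u\| = 1$ and small $\tau > 0$ into the variational inequality, dividing by $\tau$, and letting $\tau \to 0^+$ gives $\langle \nabla f(y^k), u\rangle \ge -\epsilon_k/\lambda_k$; taking $u = -\nabla f(y^k)/\|\nabla f(y^k)\|$ (when the gradient is nonzero) yields $\|\nabla f(y^k)\| \le \epsilon_k/\lambda_k \le (M-t)/\sqrt{R_k} \to 0$.

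Finally, since $\{f(y^k)\} \subset [t, M]$ is bounded, I would pass to a subsequence along which $f(y^k)$ converges to some $t^\ast \in [t, M]$; along this subsequence $\|y^k\| \to \infty$, $\|\nabla f(y^k)\| \to 0$ and $f(y^k) \to t^\ast$, so $t^\ast \in F(f)$ by definition, and after relabelling we obtain the asserted sequence $\{y^k\}$. In particular $t^\ast \in [t, M] \cap F(f)$, so this intersection is nonempty.

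I expect the only genuinely delicate point to be the calibration of $\lambda_k$: it must be large enough that the slope bound $\epsilon_k/\lambda_k$ tends to $0$ (this is exactly where the boundedness $\epsilon_k \le M - t$ coming from a second-type sequence is essential, and where the argument departs from Proposition~\ref{Prop2.2}), yet small compared with $R_k$ so that the Ekeland point $y^k$ still has $\dist(y^k,[f\le t]) \to \infty$ and lies in the region $\{f > t\}$, so that the variational inequality can be differentiated into a true bound on $\|\nabla f(y^k)\|$. Everything else is a routine transcription of the argument already used to prove $F^1(f) \subset F(f)$.
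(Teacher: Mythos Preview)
Your proof is correct and follows essentially the same Ekeland-based approach as the paper. The only difference is cosmetic: the paper takes $\lambda_k = \tfrac{1}{2}\dist(x^k,[f\le t])$ rather than your $\lambda_k = \sqrt{R_k}$, which works equally well (what is actually needed is only $\lambda_k\to\infty$ and $R_k-\lambda_k\to\infty$, not $\lambda_k=o(R_k)$ as you state).
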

\begin{proof}
	Put $X = \{x \in \mathbb{R}^n: f(x) \ge t\}, \epsilon_k = f(x^k) - t$ and $\lambda_k = \dfrac{1}{2}\dist(x^k, [f \le t])$. 
	
	As in the proof of Proposition \ref{Prop2.2}, we can find a sequence $\{y^k\} \subset X$ such that \begin{align*}
	\|y^k\|&\to \infty,\\ 
	t \le f(y^k)& \le t + \epsilon_k = f(x^k) \le M < +\infty,\\
	\lim_{k \to \infty}f(y^k) &\in F(f),\\
	\|\nabla f(y^k)\|&\to 0,\\
	\dist(y^k, x^k)& \le \lambda_k.
	\end{align*}
	Since \begin{align*}
	\dist(y^k, [f \le t]) &\ge \dist(x^k, [f \le t]) - \dist(y^k, x^k)\\
	&\ge \dist(x^k, [f \le t]) - \lambda_k = \dfrac{1}{2}\dist(x^k, [f \le t]),
	\end{align*}
	we have $\dist(y^k, [f \le t]) \to +\infty$. The proposition is proved.
\end{proof}
\begin{proposition}\label{Prop2.4}
	If $h(f) \ne -\infty$ and $\# F(f) < +\infty$, then $h(f) \in F(f)$.
\end{proposition}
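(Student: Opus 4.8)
The plan is to argue by contradiction: assuming $h(f)\notin F(f)$, I will produce a sequence of the second type of $[f\le t]$ for some $t>h(f)$, which is impossible since $h(f)=\sup F^2(f)$. (Recall from the proof of Theorem~\ref{Main} that $F^2(f)$ is downward closed from $\inf f$, so $F^2(f)$ is $\emptyset$, $[\inf f,h(f)]$, $[\inf f,h(f))$, or unbounded above.) Two ingredients drive this. First, since $\#F(f)<+\infty$ and $F(f)\subset[\inf f,+\infty)$, there are $\nu<h(f)<\mu$ with $(\nu,\mu)\cap F(f)=\emptyset$, $\mu\le\max F(f)<+\infty$, and (enlarging $\nu$) $\nu\ge\inf f$. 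Second, a \emph{bounded-retraction principle}: if $\inf f<a$ and $[a,b]\cap F(f)=\emptyset$, then $\sup\{\dist(x,[f\le a]):f(x)\le b\}<+\infty$. I would prove this exactly as in Proposition~\ref{Prop2.2}: a sequence $\{z^k\}$ with $a\le f(z^k)\le b$ and $\dist(z^k,[f\le a])\to+\infty$ would, after Ekeland's principle on $\{f\ge a\}$ with $\epsilon_k=f(z^k)-a$ and $\lambda_k=\tfrac12\dist(z^k,[f\le a])$, yield $\{y^k\}$ with $\|y^k\|\to+\infty$, $\|\nabla f(y^k)\|\le\epsilon_k/\lambda_k\to0$ and $f(y^k)\in[a,b]$, forcing $F(f)\cap[a,b]\ne\emptyset$.

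First I would dispose of the extreme cases. If $h(f)=+\infty$ then $F^2(f)$ is unbounded above, so applying Proposition~\ref{Prop2.3} to second-type sequences of $[f\le t]$ for arbitrarily large $t$ yields arbitrarily large Fedoryuk values, contradicting $\#F(f)<+\infty$; together with $h(f)\ne-\infty$ this gives $h(f)\in\mathbb{R}$. If $F^2(f)=\emptyset$ then $h(f)=\inf f$, and when the infimum is not attained a minimizing sequence of $f$ escaping to infinity, fed through Ekeland as in Proposition~\ref{Prop2.2}, already gives $\inf f\in F(f)$. So I may assume $F^2(f)\ne\emptyset$; then $F(f)\ne\emptyset$ by Proposition~\ref{Prop2.3}, $h(f)=\sup F^2(f)$, and $[\inf f,h(f))\subset F^2(f)$. (When $h(f)=\inf f$, i.e. $F^2(f)=\{\inf f\}$, the argument below is run with $a:=\inf f$ in place of a point $>\inf f$; this is permissible since $h(f)\notin F(f)$ forces $\inf f$ to be attained, so $[f\le\inf f]\ne\emptyset$.)

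Now for the heart of the proof: fix $a\in(\nu,h(f))$ with $a>\inf f$ and $b\in(h(f),\mu)$, so $[a,b]\cap F(f)=\emptyset$. Pick any $t\in(a,h(f))\subset F^2(f)$ and a second-type sequence $\{x^k\}$ of $[f\le t]$; passing to a subsequence, $f(x^k)\to c\ge t$ while $\dist(x^k,[f\le t])\to+\infty$. If $c<\mu$, choose $b'\in(c,\mu)$: then $f(x^k)\le b'$ eventually, and the bounded-retraction principle on $[a,b']$ gives $\dist(x^k,[f\le t])\le\dist(x^k,[f\le a])\le C$, a contradiction. If $c\ge\mu$, choose $\mu'$ with $b<\mu'<\min(c,\mu)$; I claim $\{x^k\}$ is then a second-type sequence of $[f\le\mu']$. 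Indeed $f(x^k)\to c>\mu'$, and if $\dist(x^k,[f\le\mu'])$ stayed bounded along a subsequence, a nearest point $P^k\in[f\le\mu']$ would satisfy either $f(P^k)\le t$, making $\dist(x^k,[f\le t])$ bounded, or $f(P^k)\in(t,\mu']$, in which case the bounded-retraction principle on the Fedoryuk-free band $[t,\mu']$ bounds $\dist(P^k,[f\le t])$ and hence $\dist(x^k,[f\le t])$ — either way contradicting $\dist(x^k,[f\le t])\to+\infty$. Thus $\mu'\in F^2(f)$ although $\mu'>h(f)=\sup F^2(f)$, a contradiction; therefore $h(f)\in F(f)$. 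The step I expect to demand the most care is precisely this dichotomy on $c=\lim f(x^k)$: a second-type sequence of $[f\le t]$ need not keep its values near $t$, and ruling out their escape past $\mu$ is exactly where the finiteness of $F(f)$ and the retraction principle must be combined, by bootstrapping the distance estimate through the band $[t,\mu']$.
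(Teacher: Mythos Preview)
Your proof is correct and takes a genuinely different route from the paper's. Both arguments begin with a second-type sequence of $[f\le t]$ for some $t$ just below $h(f)$ and rest on Proposition~\ref{Prop2.3} (your ``bounded-retraction principle'' is exactly its contrapositive, once one notes that an unbounded distance yields a subsequence going to infinity). The paper, however, reaches the contradiction by \emph{iterative descent}: from the sequence it extracts a Fedoryuk value $t_1>h(f)$, then projects onto $[f\le t_1-\delta_1]$ --- a bounded projection because $t_1-\delta_1>h(f)$ lies outside $F^2(f)$ --- to obtain a new second-type sequence of $[f\le h(f)-\epsilon]$ with strictly smaller values, extracts $t_2\in F(f)\cap(h(f),t_1-\delta_1]$, and repeats, manufacturing an infinite strictly decreasing sequence of Fedoryuk values above $h(f)$. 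You instead run a \emph{single dichotomy} on $c=\lim f(x^k)$: either $c$ lands in the Fedoryuk-free band $(\nu,\mu)$ and bounded retraction across $[a,b']$ immediately caps $\dist(x^k,[f\le t])$, or $c\ge\mu$ and bootstrapping the distance estimate through $[t,\mu']$ shows $\{x^k\}$ is already second-type for a level $\mu'>h(f)$, contradicting $h(f)=\sup F^2(f)$ without iteration. Your route is more compact and uses finiteness of $F(f)$ only to isolate the single band $(\nu,\mu)$; the paper's descent makes the role of $\#F(f)<\infty$ maximally explicit. Two minor remarks: your side condition $\mu\le\max F(f)$ is harmless but tacitly uses $h(f)\le\max F(f)$, which follows at once from Proposition~\ref{Prop2.3} applied at any $t\in F^2(f)$ with $t>\max F(f)$; and both your proof and the paper's silently assume some $t<h(f)$ lies in $F^2(f)$ --- the degenerate case $F^2(f)=\emptyset$ with $\inf f$ attained (e.g.\ $f(x)=\|x\|^2$) is handled by neither.
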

\begin{proof}
	Assume that $h(f) \ne -\infty$. By contradiction, suppose that $h(f) \notin F(f)$. Hence, either $F(f) = \emptyset$ or $F(f)$ is a non-empty finite set.
	
	By definition of $h(f)$, $[f \le h(f) - \epsilon]$ has a sequence of second type. Hence, it follows from Proposition \ref{Prop2.3}, $F(f) \ne \emptyset$. Thus, $F(f)$ is a non-empty finite set. Then, for any $\epsilon > 0$ sufficiently small, we have $[h(f) - \epsilon, h(f)] \cap F(f) = \emptyset$ and $h(f) - \epsilon \in F^2(f)$.
	
	Let $\{x^k\}$ be a sequence of the second type of $[f \le h(f) - \epsilon]$:
	$$ h(f)-\epsilon \le f(x^k) \le M, \|x^k\| \to \infty\ \text{and}\ \dist(x^k, [f \le h(f) - \epsilon])\to \infty.$$ By Proposition \ref{Prop2.3}, we may assume that $\|\nabla f(x^k)\| \to 0$ and there exists $t_1 \in F(f) \cap [h(f)-\epsilon, M]$ and $ t_1 = \lim\limits_{k\to\infty}f(x^k)$.  
	
	Let $\delta_1 > 0$ such that $t_1 - \delta_1 \notin F(f)$ and $t_1 - \delta_1 > h(f)$. Since $f(x^k) \to t_1$, we can assume that $f(x^k) > t_1 - \delta_1$ for all $k$. Let $y^k$ be the point of $[f \le t_1 - \delta_1]$ such that $\dist(x^k, [f \le t_1 - \delta_1]) = \|x^k - y^k\|$. Clearly, $y^k \in f^{-1}(t_1 - \delta_1)$.\\
	{\bf Claim:} $\{y^k\}$ is a sequence of second type of $[f \le h(f) - \epsilon]$. 
	\begin{proof}[Proof of Claim]
		Since $t_1 - \delta_1 > h(f), t_1 - \delta_1 \notin F^2(f)$. Hence, for some $A > 0$, we have $\|x^k - y^k\| \le A < +\infty$ for all $k$.
		
		Let $z^k$ be the point of $[f \le h(f) - \epsilon]$ such that $\dist(y^k, [f \le h(f) - \epsilon]) = \|y^k - z^k\|$. We have  
		\begin{align*}
		\dist(y^k, [f \le h(f) - \epsilon]) &\ge \dist(x^k, [f \le h(f) - \epsilon]) - \|x^k - y^k\|\\
		&\ge \dist(x^k, [f \le h(f) - \epsilon]) - A.
		\end{align*} 
		This shows that $\dist(y^k, [f \le h(f) - \epsilon]) \to +\infty$ and the claim is proved.
	\end{proof}
	Since $\{y^k\}$ is a sequence of the second type of $[f \le h(f) - \epsilon]$ and $f(y^k) = t_1 - \delta_1 \notin F(f)$, by Proposition \ref{Prop2.3}, there exists $t_2 \in [h(f)-\epsilon, t_1 - \delta_1] \cap F(f)$. Choose $\delta_2$ such that $t_1 - \delta_2 > h(f)$ and $t_2 - \delta_2 \notin F(f)$. Similarly as in the proof of Claim, we can find a sequence of the second type $\{y'^k\}$ of $[f \le h(f)-\epsilon]$ such that $f(y'^k) = t_2 -\delta_2$ and $t_3 \in F(f)$ such that $h(f)-\epsilon < t_3 < t_2$. 
	
	Making this process iteratively, we see that the interval $[h(f) - \epsilon, M]$ contains a infinite number of points in $F(f)$, which is a contradiction.
\end{proof} 
\section{Types of stability of global H\"{o}lderian error bounds}
We will distinguish 3 cases.
\subsection{Case 1 - $F(f) = \emptyset$}
\begin{theorem}\label{thm41}
	If $F(f) = \emptyset$ then $H(f) = (\inf f, +\infty)$ or $H(f) = [\inf f, +\infty)$.
\end{theorem}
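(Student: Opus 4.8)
The strategy is to read off the answer from the first formula for $H(f)$, Theorem~\ref{Main}; for this I must identify, under the hypothesis $F(f)=\emptyset$, both the threshold $h(f)$ and the set $F^1(f)$. I claim that $F(f)=\emptyset$ forces $F^1(f)=\emptyset$ and $F^2(f)=\emptyset$ simultaneously.

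The first of these is immediate: Proposition~\ref{Prop2.2} gives $F^1(f)\subset F(f)=\emptyset$. For the second, argue by contradiction. If $[f\le t]$ had a sequence of the second type for some $t\in\mathbb{R}$, then along it $t<f(x^k)\le M<+\infty$ for some $M$, and Proposition~\ref{Prop2.3} would place at least one Fedoryuk value inside $[t,M]$, contradicting $F(f)=\emptyset$. Hence no sublevel set has a sequence of the second type, i.e.\ $F^2(f)=\emptyset$.

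Since $F^2(f)=\emptyset$, the definition of the threshold yields $h(f)=\inf f$. Inserting this together with $F^1(f)=\emptyset$ into Theorem~\ref{Main}(i) gives
$$H(f)=[\inf f,+\infty)\setminus F^1(f)=[\inf f,+\infty),$$
which already has one of the two shapes asserted in the statement. To see which one occurs: if $\inf f=-\infty$, the right-hand side is all of $\mathbb{R}=(\inf f,+\infty)$, as happens for instance for $f(x)=x$; if $\inf f>-\infty$, then $\inf f$ is attained — otherwise a minimizing sequence must escape to infinity and, applying Ekeland's variational principle exactly as in the proof of Proposition~\ref{Prop2.2}, one obtains a sequence $y^k$ with $\|y^k\|\to\infty$, $\|\nabla f(y^k)\|\to 0$ and $f(y^k)\to\inf f$, forcing $\inf f\in F(f)=\emptyset$, absurd — so $[f\le\inf f]\neq\emptyset$ and $H(f)=[\inf f,+\infty)$.

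I do not expect a genuinely delicate step: the proof is a direct assembly of Propositions~\ref{Prop2.2} and~\ref{Prop2.3} with Theorem~\ref{Main}. The only point requiring care is the behaviour at the left endpoint $\inf f$ — whether it equals $-\infty$ or is finite, and in the finite case whether it is attained — and that is exactly what accounts for the two alternatives in the statement.
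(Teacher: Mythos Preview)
Your proof is correct and follows the same route as the paper: invoke Proposition~\ref{Prop2.2} for $F^1(f)=\emptyset$, Proposition~\ref{Prop2.3} for $F^2(f)=\emptyset$, and then read off $H(f)$ from the characterisation (the paper cites Theorem~\ref{Thm2.1} directly, you cite the derived Theorem~\ref{Main}, which amounts to the same thing). Your additional Ekeland argument showing that a finite $\inf f$ is necessarily attained when $F(f)=\emptyset$ --- and hence that the closed-interval alternative always holds in that case --- is a correct sharpening that the paper's proof does not carry out.
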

\begin{proof}
	Assume that $F(f) = \emptyset$. Then by Proposition \ref{Prop2.2}, $F^1(f) = \emptyset$. Moreover, it follows from Proposition \ref{Prop2.3} that $F^2(f)$ is also empty.
	
	Hence, by Theorem \ref{Thm2.1}, $H(f) = (\inf f, +\infty)\setminus(F^1(f) \cup F^2(f)) = (\inf f, +\infty)$ or $H(f) = [\inf f, +\infty)\setminus(F^1(f) \cup F^2(f)) = [\inf f, +\infty)$.
\end{proof}
\begin{definition}
	Let $t \in \mathbb{R}$.
	\begin{enumerate}
		\item[1.] $t$ is called {\em y-stable} if $t \in H(f)$ and there exists an open interval $I(t)$ such that $t \in I(t) \subset H(f)$;
		\item[2.] $t$ is called {\em y-right stable} if $t \in H(f)$ and there exists $\epsilon > 0$ such that $[t, t + \epsilon) \subset H(f)$ and $(t - \epsilon, t ) \cap H(f) = \emptyset$.
	\end{enumerate}
	
\end{definition}
\begin{corollary}\label{cor4.1}
	If $F(f) = \emptyset$, then we have two cases
	\begin{enumerate}
		\item[1.] If $H(f) = (\inf f, +\infty)$, then there is only one type of stability of GHEB. Namely, for all $t \in (\inf f, +\infty)$, $t$ is y-stable.
		\item[2.] If $H(f) = [\inf f, +\infty)$, then then there are two stability types of GHEB. Namely, for all $t \in (\inf f, +\infty)$, $t$ is y-stable and for $t = \inf f$, $t$ is y-right stable.
	\end{enumerate}   
\end{corollary}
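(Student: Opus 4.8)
The plan is to read the corollary straight off Theorem~\ref{thm41} by unwinding the two definitions, the only substantive input being the elementary observation that $H(f)\subseteq[\inf f,+\infty)$ always: if $t<\inf f$, then $[f\le t]=\emptyset$, and an empty sublevel set has no GHEB by the standing convention in the Definition, so no such $t$ lies in $H(f)$. Consequently, for any $t$ with $t\le\inf f$ and any small $\epsilon>0$, one has $(t-\epsilon,t)\cap H(f)=\emptyset$; this fact is what will produce the ``left-empty'' half of y-right stability at $t=\inf f$.

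First I would invoke Theorem~\ref{thm41}, which under the hypothesis $F(f)=\emptyset$ gives exactly the two alternatives $H(f)=(\inf f,+\infty)$ and $H(f)=[\inf f,+\infty)$; the corollary just records what each alternative means for the stability classification. In the first case, for an arbitrary $t\in(\inf f,+\infty)$ I would choose $\epsilon>0$ with $\epsilon<t-\inf f$ (any $\epsilon>0$ if $\inf f=-\infty$) and set $I(t)=(t-\epsilon,t+\epsilon)$; then $I(t)\subseteq(\inf f,+\infty)=H(f)$, so $t$ is y-stable by definition, and since $(\inf f,+\infty)$ exhausts $H(f)$ this is the only type occurring.

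In the second case, $H(f)=[\inf f,+\infty)$, the same choice of $I(t)$ shows that every $t\in(\inf f,+\infty)$ is y-stable. For the one remaining point $t=\inf f$ I would first note that $\inf f\in H(f)$ forces $[f\le\inf f]\neq\emptyset$, hence $\inf f$ is finite and attained; then for any $\epsilon>0$ we have $[\inf f,\inf f+\epsilon)\subseteq[\inf f,+\infty)=H(f)$ while $(\inf f-\epsilon,\inf f)\cap H(f)=\emptyset$ by the observation of the first paragraph, which is precisely the definition of y-right stability. This disposes of both assertions.

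I do not expect any real obstacle here; the statement is a bookkeeping consequence of Theorem~\ref{thm41}. The only subtlety worth a line is the degenerate possibility $\inf f=-\infty$: it can occur under Case~1 (e.g.\ $f(x)=x$, where $F(f)=\emptyset$ and $H(f)=\mathbb{R}=(\inf f,+\infty)$) but never under Case~2, since $\inf f\in H(f)$ already requires $[f\le\inf f]\neq\emptyset$ and hence $\inf f$ finite, so the expression $t=\inf f$ in part~2 is always meaningful.
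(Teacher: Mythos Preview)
Your argument is correct and matches the paper's treatment: the paper states Corollary~\ref{cor4.1} without proof, regarding it as an immediate consequence of Theorem~\ref{thm41} and the definitions of y-stable and y-right stable, which is exactly what you unwind. Your extra remarks on $H(f)\subseteq[\inf f,+\infty)$ and on the impossibility of $\inf f=-\infty$ in Case~2 are welcome clarifications that the paper leaves implicit.
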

\begin{remark}\label{remark41}{\rm 
We recall here results of \cite{Ha} about the role that Newton polyhedron plays in studying GHEB's.	

Let $f(x) = \sum a_\alpha x^\alpha$ be a polynomial in $n$ variables. Put $supp(f) = \{\alpha \in (\mathbb{N} \cup \{0\})^n: a_\alpha \ne 0\}$ and denote $\Gamma_f$ the convex hull in $\mathbb{R}^n$ of the set $\{(0,0, \dots, 0)\} \cup supp(f)$. Following \cite{Kou} we call $\Gamma_f$ {\em the Newton polyhedron at infinity of $f$}. 

Let $\Delta$ be a face (of any dimension) of $\Gamma_f$, set: $$ f_\Delta(x):= \sum\limits_{\alpha \in \Delta}a_\alpha x^\alpha.$$
\begin{Definition}[\cite{Kou}]{\rm We say that a polynomial $f$ is nondegenerate with respect to its Newton boundary at infinity (nondegenerate for short), if for every face $\Delta$ of $\Gamma_{f}$ not containing the origin, the system $$ x_i\dfrac{\partial f_\Delta}{\partial x_i} = 0, i = 1, \dots, n.$$ has no solution in $(\mathbb{R}\setminus \{0\})^n$.}
\end{Definition}
\begin{Definition}{\rm A polynomial $f(x)=\sum a_\alpha x^\alpha$ in $n$ variables is said to be convenient if for every $i$, there exists a monomial of $f$ of the form $x_i^{\alpha_i}, \alpha_i > 0$, with a non-zero coefficient.} 
\end{Definition}

\begin{theorem}[\cite{Ha}]\label{thmHa}
	 If $f$ is convenient and nondegenerate w.r.t. its Newton polyhedron at infinity, then there exist $r, \delta > 0$ such that $$\|\nabla f(x)\| \ge \delta\ \text{for}\ \|x\| \ge r \gg 1. $$  In particular, $F(f) = \emptyset$.
\end{theorem}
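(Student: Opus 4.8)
The plan is to argue by contradiction using the curve selection lemma at infinity, after which the whole matter reduces to a Newton--Puiseux bookkeeping of a single real analytic arc, with convenience and nondegeneracy entering at exactly two places. Suppose no such $r,\delta>0$ exist. Then for each $k$ there is $x^k$ with $\|x^k\|\ge k$ and $\|\nabla f(x^k)\|<1/k$, so the origin lies in the closure of $\nabla f(\{\|x\|\ge R\})$ for every $R>0$. By the curve selection lemma at infinity there is a real analytic arc $\varphi\colon(0,\varepsilon)\to\mathbb{R}^n$ with $\|\varphi(s)\|\to\infty$ and $\|\nabla f(\varphi(s))\|\to0$ as $s\to0^+$. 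After a reparametrization $s\mapsto s^N$ each component has a Puiseux expansion $\varphi_j(s)=c_js^{w_j}+(\text{higher powers of }s)$ with $c_j\ne0$ and $w_j\in\mathbb{Q}$ for $j$ in $J:=\{j:\varphi_j\not\equiv0\}$, and $A:=\min_{j\in J}w_j<0$ since $\|\varphi(s)\|\to\infty$.

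The next step is to reduce to the case $J=\{1,\dots,n\}$. Let $g:=f|_{\{x_j=0\,:\,j\notin J\}}$, a polynomial in the variables $(x_j)_{j\in J}$. Since $f$ is convenient it contains a monomial $x_j^{d_j}$ with $d_j\ge1$ for every $j$, so $g$ is convenient; and since $\Gamma_g=\Gamma_f\cap\{\alpha_j=0:j\notin J\}$ is a face of $\Gamma_f$, every face $\Delta$ of $\Gamma_g$ not containing the origin is a face of $\Gamma_f$ not containing the origin, on which $g_\Delta=f_\Delta$, whence $g$ is nondegenerate. Finally, for $i\in J$ we have $\partial_{x_i}f(\varphi(s))=\partial_{x_i}g\big((\varphi_j(s))_{j\in J}\big)$, so $\nabla g$ also tends to $0$ along the arc; thus I may replace $(f,\varphi)$ by $(g,(\varphi_j)_{j\in J})$ and assume all $c_j\ne0$.

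Now put $w=(w_1,\dots,w_n)$, $d:=\min_{\alpha\in\Gamma_f}\langle w,\alpha\rangle$ and $\Delta:=\{\alpha\in\Gamma_f:\langle w,\alpha\rangle=d\}$, a face of $\Gamma_f$, and write $c=(c_1,\dots,c_n)\in(\mathbb{R}\setminus\{0\})^n$. The decisive use of convenience is the inequality $d\le A<0$: if $w_{j_0}=A$, then $d_{j_0}e_{j_0}\in\operatorname{supp}(f)$ (where $e_{j_0}$ is the $j_0$-th standard basis vector) gives $d\le d_{j_0}w_{j_0}\le w_{j_0}=A$. In particular $0\notin\Delta$, so the nondegeneracy hypothesis applies to $\Delta$: since $c$ lies in the torus, there is $i_0$ with $c_{i_0}\partial_{x_{i_0}}f_\Delta(c)\ne0$, hence $\partial_{x_{i_0}}f_\Delta(c)\ne0$. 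Substituting $\varphi$ into $\partial_{x_{i_0}}f=\sum_{\alpha_{i_0}\ge1}\alpha_{i_0}a_\alpha x^{\alpha-e_{i_0}}$, the terms of lowest order in $s$ are exactly those with $\alpha\in\Delta$, and the sum of their coefficients is $\partial_{x_{i_0}}f_\Delta(c)\ne0$; therefore $\partial_{x_{i_0}}f(\varphi(s))=\partial_{x_{i_0}}f_\Delta(c)\,s^{\,d-w_{i_0}}\big(1+o(1)\big)$ as $s\to0^+$. But $w_{i_0}\ge A\ge d$, so $d-w_{i_0}\le0$, and hence $|\partial_{x_{i_0}}f(\varphi(s))|$ tends to $|\partial_{x_{i_0}}f_\Delta(c)|>0$ or to $+\infty$; in either case it does not tend to $0$, contradicting $\|\nabla f(\varphi(s))\|\to0$. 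This proves the gradient lower bound, and then $F(f)=\emptyset$ is immediate: a sequence realizing $t\in F(f)$ eventually lies in $\{\|x\|\ge r\}$, where $\|\nabla f\|\ge\delta$.

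The routine parts are the curve selection lemma at infinity and the verification that $g$ inherits convenience and nondegeneracy from $f$ via the face structure of $\Gamma_f$ (the slightly fussy point there is that a face of $\Gamma_g$ missing the origin is cut out of $\Gamma_f$ by an extended linear functional). The step I expect to require the most care is the order computation $\partial_{x_{i_0}}f(\varphi(s))=\partial_{x_{i_0}}f_\Delta(c)\,s^{\,d-w_{i_0}}(1+o(1))$: one must check that the lowest-order coefficients along the arc genuinely do not cancel, which is exactly what $\partial_{x_{i_0}}f_\Delta(c)\ne0$ provides. Conceptually, the single identity without which the argument collapses is $d\le\min_j w_j$, coming from convenience, combined with the existence of a non-vanishing partial derivative of $f_\Delta$ at the torus point $c$, coming from nondegeneracy.
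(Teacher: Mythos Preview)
The present paper does not prove this theorem; it is quoted from \cite{Ha} as background inside Remark~\ref{remark41}, so there is no in-paper proof to compare against. Your argument is correct and follows the standard route for results of this kind: contradict via the curve selection lemma at infinity, read off a weight vector $w$ from the leading exponents of the arc, pass to the supporting face $\Delta$ of $\Gamma_f$, use convenience to force $0\notin\Delta$ (via $d\le\min_j w_j<0$), and then use nondegeneracy to produce an index $i_0$ with $\partial_{x_{i_0}}f_\Delta(c)\ne0$, which pins down the leading term of $\partial_{x_{i_0}}f(\varphi(s))$ with nonpositive $s$-exponent $d-w_{i_0}$.

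The reduction to $J=\{1,\dots,n\}$ by restricting to a coordinate subspace is also handled correctly: since $\Gamma_f\subset\mathbb{R}_{\ge0}^n$, the coordinate subspace $\{\alpha_j=0:j\notin J\}$ meets $\Gamma_f$ in a face, so faces of $\Gamma_g$ missing the origin are faces of $\Gamma_f$ missing the origin with $g_\Delta=f_\Delta$; convenience and nondegeneracy therefore descend, and for $i\in J$ the partials $\partial_{x_i}f$ and $\partial_{x_i}g$ agree along the arc. The one place a reader might want an extra line is the order computation: you should make explicit that every $\alpha\in\operatorname{supp}(f)$ with $\alpha_{i_0}\ge1$ satisfies $\langle w,\alpha\rangle\ge d$, with equality exactly on $\Delta$, and that such $\alpha$ exist because $\partial_{x_{i_0}}f_\Delta\not\equiv0$; this is what guarantees the claimed leading coefficient $\partial_{x_{i_0}}f_\Delta(c)$ really is the leading one. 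You flag this yourself, and with that clarification the proof is complete.
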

 Let $\mathbb{R}[x_1, \dots, x_n]$ denote the ring of polynomials in $n$ variables over $\mathbb{R}$.
 
 For $g \in \mathbb{R}[x_1, \dots, x_n]$, as before, $\Gamma_g$ denotes the Newton polyhedron at infinity of $g$. Let $f \in \mathbb{R}[x_1, \dots, x_n]$ be a convenient polynomial.
 
 Put $\Gamma:=\Gamma_f$ and 
 $$ \mathcal{A}_\Gamma = \{g \in \mathbb{R}[x_1, \dots, x_n]: \Gamma_g \subset \Gamma \}. $$
The set $\mathcal{A}_\Gamma$ can be identified to the space $\mathbb{R}^m$, where $m$ is the number of integer points of $\Gamma$. 

Put $\mathcal{B}_\Gamma = \{h \in \mathcal{A}_\Gamma: \Gamma_h = \Gamma\ \text{and $h$ is nondegenerate} \}$. According to \cite{Kou}, $\mathcal{B}_\Gamma$ is an open and dense subset of $\mathcal{A}_\Gamma$. Hence, Theorem \ref{thm41} and \ref{thmHa} show that if $f$ is a {\em generic} polynomial, then $H(f) = (\inf f, +\infty)$ or $H(f) = [\inf f, +\infty)$. By Corollary \ref{cor4.1}, any value $t \in (\inf f, +\infty)$ is y-stable and $t = \inf f$ is y-right stable where $H(f) = [\inf f, +\infty)$
}\end{remark}
\subsection{Case 2 - $F(f)$ is non-empty finite set}
\begin{proposition}\label{Prop3.1}
	If $\# F(f) < +\infty$, then $H(f) \ne \emptyset$.
\end{proposition}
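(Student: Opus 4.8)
The plan is to derive this from Theorem \ref{Main} (the first formula for $H(f)$) together with Propositions \ref{Prop2.2} and \ref{Prop2.3}. By Theorem \ref{Main}, the set $H(f)$ can be empty only in case (ii), that is, only when $h(f)=+\infty$; in every other case $H(f)$ equals one of $[\inf f,+\infty)\setminus F^1(f)$, $(h(f),+\infty)\setminus F^1(f)$ or $[h(f),+\infty)\setminus F^1(f)$. Now $F^1(f)\subseteq F(f)$ by Proposition \ref{Prop2.2}, so the hypothesis $\#F(f)<+\infty$ makes $F^1(f)$ a finite set, and removing a finite set from an unbounded half-line leaves a nonempty (in fact unbounded) set. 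Thus the whole problem reduces to showing that $\#F(f)<+\infty$ forces $h(f)\neq+\infty$.

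I would prove this last assertion by contradiction. Suppose $h(f)=+\infty$; by the definition of $h(f)$ this means that $F^2(f)$ is unbounded above. Since $F(f)$ is finite it is bounded above, say by $T\in\mathbb{R}$, so I can choose $t_0\in F^2(f)$ with $t_0>T$ (and, enlarging $t_0$ if necessary, with $t_0\ge\inf f$). As $t_0\in F^2(f)$, the sublevel set $[f\le t_0]$ admits a sequence $\{x^k\}$ of the second type: $\|x^k\|\to\infty$, $t_0<f(x^k)\le M<+\infty$ and $\dist(x^k,[f\le t_0])\to+\infty$. Proposition \ref{Prop2.3} then guarantees that the segment $[t_0,M]$ contains a point of $F(f)$, which is impossible since every point of $[t_0,M]$ exceeds $T\ge\sup F(f)$. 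Hence $h(f)\neq+\infty$ and, by the reduction above, $H(f)\neq\emptyset$.

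This argument is short, and I do not expect a serious obstacle; the only delicate point is the edge behaviour, namely that before applying Proposition \ref{Prop2.3} one should make sure $t_0$ lies in the range of $f$, so that $[f\le t_0]\neq\emptyset$ and the distances $\dist(x^k,[f\le t_0])$ are finite before tending to $+\infty$, which is automatic once $t_0\ge\inf f$. One could alternatively extract the contradiction from Proposition \ref{Prop2.4} (if $h(f)=+\infty\neq-\infty$ and $\#F(f)<+\infty$ then $h(f)$ would have to lie in $F(f)\subset\mathbb{R}$), but I prefer the direct route above since it does not rely on whether that proof was written for finite $h(f)$ only. Finally, the special case $F(f)=\emptyset$ is already contained in Theorem \ref{thm41} and is merely an instance of the contradiction just described, so no separate treatment is needed.
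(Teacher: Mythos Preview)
Your proof is correct and follows essentially the same strategy as the paper: both reduce the question, via Theorem~\ref{Main} and Proposition~\ref{Prop2.2}, to showing that $h(f)=+\infty$ is impossible when $\#F(f)<+\infty$, and both extract the contradiction from Proposition~\ref{Prop2.3}. The only difference is in how that contradiction is organized: the paper iterates Proposition~\ref{Prop2.3} to manufacture an infinite sequence $a_1<a_2<\cdots$ of distinct Fedoryuk values (choosing each new $t_k$ above the previous $M_{k-1}$), thereby contradicting $\#F(f)<+\infty$ directly; you instead pick a single $t_0>\sup F(f)$ and get the contradiction from one application of Proposition~\ref{Prop2.3}. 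Your version is a slight streamlining of the same idea, and your remark that Proposition~\ref{Prop2.4} could also be invoked is correct but, as you note, that proposition is really stated for finite $h(f)$.
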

\begin{proof}
	By contradiction, assume that $H(f) = \emptyset$. Since $\# F(f) < +\infty$, we have $\# F^1(f) < +\infty$ (Proposition \ref{Prop2.2}). Then, it follows from the first formula that $H(f) = \emptyset$ if and only if $h(f) = +\infty$ but the later is impossible, since we have
	
	{\bf Claim:} If $h(f) = +\infty$, then $\# F(f) = +\infty$.
	\begin{proof}[Proof of Claim]
		Take $t_1 \in \mathbb{R}$, since $h(f) = +\infty$, $[f \le t_1]$ has a sequence of the second type. By Proposition \ref{Prop2.3}, there exists $M_1 > t_1$ and $a_1 \in [t_1, M_1] \cap F(f)$. Take $t_2$ such that $M_1 < t_2$, then $[f \le t_2]$ has a sequence of the second type. Hence, there exists $M_2 > t_2$ and $a_2$ such that $a_2 \in [t_2, M_2] \cap F(f)$. Continuing this way, we find an infinite sequence $a_1, a_2, a_3, \dots$ of $F(f)$. Therefore, $\# F(f) = +\infty$.
	\end{proof}
	
\end{proof}
Now, we classify the stability types of GHEB in the case when $F(f)$ is a non-empty finite set. 
\begin{definition}\label{def42} Let $t \in \mathbb{R}$.
	\begin{enumerate}
		\item[1.] Recall that $t$ is called y-stable if $t \in H(f)$ and there exists an open interval $I(t)$ such that $t \in I(t) \subset H(f)$;
		\item[2.] Recall that $t$ is called y-right stable if $t \in H(f)$ and there exists $\epsilon > 0$ such that $[t, t + \epsilon) \subset H(f)$ and $(t - \epsilon, t ) \cap H(f) = \emptyset$;
		\item[1'.] $t$ is called {\em n-stable} if $t \in [\inf f, +\infty)\setminus H(f)$ and there exists an open interval $I(t)$ such that $t \in I(t) \subset \mathbb{R} \setminus H(f)$;
		\item[2'.] $t$ is called {\em n-right stable} if $t \in [\inf f, +\infty) \setminus H(f)$ and there exists $\epsilon > 0$ such that $[t, t + \epsilon) \subset [\inf f, +\infty)\setminus H(f)$ and $(t - \epsilon, t ) \cap ([\inf f, +\infty) \setminus H(f)) = \emptyset$;
		\item[3'.] $t$ is called {\em n-left stable} if $t \in [\inf f, +\infty) \setminus H(f)$ and there exists $\epsilon > 0$ such that $(t - \epsilon, t] \subset [\inf f, +\infty) \setminus H(f)$ and $(t, t + \epsilon) \cap H(f) \ne \emptyset$;
		\item[4'.] $t$ is called {\em n-isolated} if $t \in \mathbb{R}\setminus H(f)$ and for $\epsilon > 0$ sufficiently small, $(t - \epsilon, t) \cup (t, t + \epsilon) \subset H(f)$.  
	\end{enumerate}	
\end{definition}
It follows from the first formula that
\begin{theorem}\label{thm43}
	Let $F(f)$ be a non-empty finite set and $t \in [\inf f, +\infty)$. Then, $t$ is one of the following types  
	\begin{description}
		\item[Case A] If $h(f) = -\infty$, then
		\begin{enumerate}
			\item[(i)] $t$ is y-stable if and only if $t \notin F^1(f)$.
			\item[(ii)] $t$ is a n-isolated point if and only if $t \in F^1(f)$. 
		\end{enumerate}
		
		\item[Case B] If $h(f)$ is a finite value, then 
			\begin{enumerate} 
				\item[1.] $t$ is y-stable if and only if $t > h(f)$ and $t\notin F^1(f)$; 
				
				\item[2.] $t$ is y-right stable if and only if $t = h(f)$ and $h(f) \in H(f)$; 			
			
				\item[1'.] $t$ is n-stable if and only if $\inf f < t < h(f)$;
				
				\item[2'.] $t$ is n-right stable if and only if $t = \inf f < h(f)$ and $f^{-1}(\inf f) \ne \emptyset$;
				
				\item[3'.] $t$ is n-left stable if and only if $t = h(f)$ and $h(f) \notin H(f)$;
				
				\item[4'.] $t$ is a n-isolated point if and only if $t > h(f)$ and $t \in F^1(f)$.
			\end{enumerate}

	\end{description}
\end{theorem}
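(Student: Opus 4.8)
The plan is to derive everything from the first formula for $H(f)$ (Theorem \ref{Main}), combined with the structural facts $F^1(f) \subset F(f)$ (Proposition \ref{Prop2.2}), $h(f) \in F(f) \cup \{\pm\infty\}$ when $\#F(f) < +\infty$ (Proposition \ref{Prop2.4}), and the observation (from the proof of Theorem \ref{Main}) that $F^2(f)$ is an interval of the form $(\inf f, h(f))$ or $(\inf f, h(f)]$ (or $\emptyset$ or $\mathbb{R}$, the latter excluded here since $h(f) \ne +\infty$ would fail, see the Claim in Proposition \ref{Prop3.1}). The key auxiliary observation is that since $\#F(f) < +\infty$ and $F^1(f) \subset F(f)$, the set $F^1(f)$ is a finite set of points; hence on any interval disjoint from $F^1(f)$, membership in $H(f)$ is governed purely by the comparison of $t$ with $h(f)$.

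First I would treat Case A, $h(f) = -\infty$. By definition of $h(f)$ this forces $F^2(f) = \emptyset$ (the only remaining alternatives $F^2(f) = \{\inf f\}$, $\mathbb{R}$, or a nonempty bounded set all give $h(f) \ge \inf f > -\infty$ or $h(f) = +\infty$); but then by Theorem \ref{Main}(i) with $h(f)$ effectively $\le \inf f$, one has $H(f) = [\inf f, +\infty) \setminus F^1(f)$, and $F^1(f)$ is a finite subset of $F(f)$. So for $t \in [\inf f, +\infty)$: if $t \notin F^1(f)$, then a whole neighborhood of $t$ (minus finitely many points, hence shrinkable to an honest open interval around $t$ avoiding $F^1(f)$) lies in $H(f)$, giving y-stable; if $t \in F^1(f)$, then $t \notin H(f)$ while for small $\epsilon$ the punctured interval $(t-\epsilon,t)\cup(t,t+\epsilon)$ avoids $F^1(f)$ and lies in $[\inf f,+\infty)$, hence in $H(f)$, giving n-isolated. (A minor point: one must check $t = \inf f \in F^1(f)$ still yields the n-isolated description, i.e. that $(t-\epsilon, t)$ need not be inside $[\inf f,+\infty)$ — but the Definition \ref{def42} of n-isolated uses $\mathbb{R}\setminus H(f)$ phrasing consistent with this; I would flag this edge case carefully.)

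Next, Case B with $h(f) \in \mathbb{R}$. By Proposition \ref{Prop2.4}, $h(f) \in F(f)$. Split on whether $h(f) \in F^2(f)$. If $h(f) \in F^2(f)$, Theorem \ref{Main}(iii) gives $H(f) = (h(f),+\infty)\setminus F^1(f)$; if $h(f)\notin F^2(f)$, Theorem \ref{Main}(iv) gives $H(f) = [h(f),+\infty)\setminus F^1(f)$. In both cases $[\inf f, h(f)) \cap H(f) = \emptyset$, while $(h(f),+\infty)\setminus F^1(f) \subset H(f)$. Now run through each $t$: for $t > h(f)$, since $F^1(f)$ is finite, $t\notin F^1(f)$ iff a neighborhood of $t$ lies in $H(f)$ (y-stable), and $t\in F^1(f)$ iff $t\notin H(f)$ with a punctured neighborhood in $H(f)$ (n-isolated). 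For $\inf f < t < h(f)$: the open interval $(\inf f, h(f))$ is disjoint from $H(f)$ and contained in $[\inf f,+\infty)$, so $t$ is n-stable. For $t = h(f)$: it is in $H(f)$ iff $h(f)\notin F^2(f)$ and $h(f)\notin F^1(f)$ — when $h(f)\in H(f)$, the left side $(h(f)-\epsilon,h(f))$ misses $H(f)$ and the right side $[h(f),h(f)+\epsilon)$ (shrunk past finitely many $F^1$-points) lies in $H(f)$, i.e. y-right stable; when $h(f)\notin H(f)$, symmetrically it is n-left stable (the right side meets $H(f)$ because $h(f)\in F(f)$ is isolated in the finite set $F(f)$, so points just above escape $F^1(f)$). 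For $t = \inf f < h(f)$: if $f^{-1}(\inf f)\ne\emptyset$ then $\inf f \in [\inf f,+\infty)$ and its right side $(\inf f,\inf f+\epsilon)\subset(\inf f,h(f))$ misses $H(f)$, the left side being empty within $[\inf f,+\infty)$, so n-right stable; if $f^{-1}(\inf f)=\emptyset$ then $\inf f \notin [\inf f,+\infty)$ in the relevant sense and this $t$ simply isn't in the domain being classified.

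The main obstacle I anticipate is not any single deep step but the bookkeeping at the two boundary points $t = h(f)$ and $t = \inf f$: one must pin down precisely when $h(f)\in H(f)$ (which requires simultaneously $h(f)\notin F^2(f)$, equivalently $F^2(f)$ is the half-open-at-top interval, \emph{and} $h(f)\notin F^1(f)$), and one must use the finiteness of $F(f)$ crucially to guarantee that $(h(f), h(f)+\epsilon)$ and $(\inf f, \inf f+\epsilon)$ are free of $F^1(f)$-points for small $\epsilon$, so that the "there exists $\epsilon$" clauses in the definitions of y-right/n-left/n-right stable are genuinely satisfied. I would also double-check the claim that $h(f)\notin F^2(f)$ can actually occur together with $h(f)\in H(f)$ by noting $F^1(f) \subset F(f)$ is finite and $h(f)$ need not lie in it. The two "Note that" remarks in the theorem statement (that y-right/n-left stable forces $t = h(f)$, and n-right stable forces $t=\inf f < h(f)$ with $f^{-1}(\inf f)\ne\emptyset$) then fall out immediately as the contrapositives of the iff's just established.
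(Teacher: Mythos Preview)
Your proposal is correct and follows exactly the approach of the paper: the paper's own proof consists solely of the phrase ``It follows from the first formula that'' preceding the statement, so you have simply written out the case analysis that the paper leaves entirely to the reader. Your auxiliary ingredients (Proposition~\ref{Prop2.2} for $F^1(f)\subset F(f)$ finite, Proposition~\ref{Prop2.4} for $h(f)\in F(f)$, and the interval structure of $F^2(f)$ recorded in the proof of Theorem~\ref{Main}) are precisely the facts implicitly invoked, and your handling of the boundary cases $t=h(f)$ and $t=\inf f$ is the right bookkeeping; the edge case you flag about $f^{-1}(\inf f)=\emptyset$ is indeed the reason that clause appears in item~2$'$.
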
 
\begin{remark}{\rm 
		Here, if we have item 2, then we does not have item 3' and vice versa. 	
}\end{remark}
Now, to complete this subsection, we add an estimation of the number of connected components of $H(f)$ for the case $\# F(f) < +\infty$.

Let us denote $C(S)$ the number of connected components of $S \subset \mathbb{R}^n$, we have the following result
\begin{theorem}\label{thm44}
	Let $f: \mathbb{R}^n \to \mathbb{R}$ be an any polynomial of degree $d$. Then, if $\# F(f) < +\infty$, we have $$ C(H(f)) \le (d-1)^{n-1} + 1. $$ 	
\end{theorem}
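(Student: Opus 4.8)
The plan is to combine the first formula for $H(f)$ (Theorem \ref{Main}) with the fact that $F^1(f) \subset F(f)$ (Proposition \ref{Prop2.2}) and with a quantitative bound on the number of connected components of a semialgebraic set cut out by the partial derivatives of $f$. Since $\# F(f) < +\infty$, Proposition \ref{Prop2.2} gives $F^1(f) \subset F(f)$, so $F^1(f)$ is a finite set, say of cardinality $p$. By Theorem \ref{Main}, $H(f)$ is an interval $[h(f),+\infty)$ or $(h(f),+\infty)$ (or empty, but then the bound is trivial) with at most $p$ points deleted, hence $C(H(f)) \le p+1$. So the whole problem reduces to bounding $\# F^1(f)$, and it suffices to prove $\# F(f) \le (d-1)^{n-1}$ whenever $F(f)$ is finite.

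To bound $\# F(f)$, I would first normalize: by Proposition \ref{Changevar} a generic linear change of coordinates does not affect $H(f)$, and one checks it does not affect $F(f)$ either, so we may assume $f$ is in the form $(*)$, i.e.\ monic of degree $d$ in $x_n$. The key geometric observation is that every Fedoryuk value is a limit of critical-type values along the curve selection / Milnor-fibration machinery: more concretely, using the curve selection lemma at infinity one shows that for each $t\in F(f)$ there is a real analytic (or Puiseux) arc $x(s)$, $\|x(s)\|\to\infty$, along which $\nabla f(x(s)) \to 0$ and $f(x(s)) \to t$; after reparametrizing one may take the arc to lie in the set $\Sigma = \{x : \frac{\partial f}{\partial x_1} = \cdots = \frac{\partial f}{\partial x_{n-1}} = 0\}$ (the "partial Jacobian" set, of dimension at most $1$ for generic coordinates by Bertini–Sard at infinity), because the gradient tending to zero forces all partials small and one can correct the arc to hit the common zero of the first $n-1$ partials. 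Each one-dimensional branch of $\Sigma$ contributes at most one limit value $t$ of $f$ at infinity, so $\# F(f)$ is bounded by the number of branches at infinity of the curve $\Sigma$. Finally, $\Sigma$ is defined by $n-1$ equations each of degree $\le d-1$ (the partials $\partial f/\partial x_i$ drop degree by one), so by Bézout its number of branches at infinity — equivalently, the degree of the projective closure of the relevant one-dimensional component — is at most $(d-1)^{n-1}$, giving $\# F(f) \le (d-1)^{n-1}$ and hence $C(H(f)) \le (d-1)^{n-1}+1$.

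The main obstacle I anticipate is making rigorous the reduction from "a Fedoryuk arc in $\mathbb{R}^n$" to "a branch at infinity of the explicit curve $\Sigma$ of the projective/Bézout estimate", together with the claim that distinct branches give distinct limit values when $F(f)$ is finite. Both points are delicate: the curve selection lemma at infinity must be applied in a semialgebraic setting (using Lemma \ref{Lemma3.1}), and one must argue that along a branch of $\Sigma$ the function $f$ either tends to a single limit in $\mathbb{R}$ or to $\pm\infty$ — the latter branches simply do not contribute, and if two branches gave the same finite value one still counts it once, which only helps. One also has to handle the genericity of the coordinate change carefully so that $\Sigma$ is genuinely a curve (dimension $1$) rather than higher-dimensional; this is where the hypothesis $\#F(f)<+\infty$ is used, since an infinite $F(f)$ is exactly the obstruction to $\Sigma$ being a curve after reduction. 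I would organize the write-up as: (1) reduce to bounding $\#F^1(f)\le\#F(f)$ via Theorem \ref{Main} and Proposition \ref{Prop2.2}; (2) reduce to form $(*)$ and set up $\Sigma$; (3) curve selection at infinity to put Fedoryuk arcs inside $\Sigma$; (4) Bézout bound $(d-1)^{n-1}$ on branches of $\Sigma$ at infinity; (5) conclude.
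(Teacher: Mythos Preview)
Your reduction step is exactly the paper's: by Theorem~\ref{Main} and Proposition~\ref{Prop2.2}, $H(f)$ is an interval (possibly half-open, possibly empty) with a finite subset of $F^1(f)\subset F(f)$ removed, so $C(H(f))\le \#F(f)+1$, and everything comes down to showing $\#F(f)\le(d-1)^{n-1}$ under the finiteness hypothesis.

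For this bound the paper takes a completely different and much shorter route than yours: it passes to the complex Fedoryuk set
$F_{\mathbb C}(f)=\{t\in\mathbb C:\exists\,x^k\in\mathbb C^n,\ \|x^k\|\to\infty,\ \|\nabla f(x^k)\|\to0,\ f(x^k)\to t\}$,
notes that $F(f)\subset F_{\mathbb C}(f)$, and then simply invokes Jelonek's theorem \cite{Je}, which yields $\#F_{\mathbb C}(f)\le(d-1)^{n-1}$. No curve selection, no polar curve, no B\'ezout count --- just a citation.

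Your step (3), ``correct the Fedoryuk arc so that it lies in $\Sigma=\{\partial f/\partial x_1=\cdots=\partial f/\partial x_{n-1}=0\}$'', is a genuine gap rather than a technicality. A Fedoryuk sequence has \emph{all} partials tending to zero, but over $\mathbb R$ there is no mechanism guaranteeing that the real algebraic set where $n-1$ chosen partials vanish exactly passes anywhere near the sequence; the real locus $\Sigma$ can be bounded or even empty while Fedoryuk values still exist. The set $\Sigma$ you wrote down is also not the polar variety or the Milnor/KOS set that standard proofs of such bounds actually use, and the claim that ``$\#F(f)=+\infty$ is exactly the obstruction to $\Sigma$ being a curve'' is unsupported. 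You are right to flag this as the main obstacle --- but the paper sidesteps it entirely by quoting the known complex bound rather than reproving it.
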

\begin{proof}
	Put	$$ F_{\mathbb{C}}(f):=\{t \in \mathbb{C} : \exists \{x^k\} \subset \mathbb{C}^n, \|x^k\| \to \infty, \|\nabla f(x^k)\| \to 0, f(x^k) \to t\}. $$
	
	Since $\# F(f) < +\infty $, we have $ \# F_{\mathbb{C}}(f) < +\infty$. Then, according to Theorem 1.1 of \cite{Je}, we have $$\# F(f) \le \# F_{\mathbb{C}}(f) \le (d-1)^{n-1}.$$ Hence, it follows from the first formula that $$C(H(f)) \le (d-1)^{n-1} + 1.$$
\end{proof}
\subsection{Case 3 - $F(f)$ is an infinite set}\quad\\
In this case, the following lemma tells us that the set $H(f)$ has still very simple structure
\begin{lemma}\label{Lemma2.1}
	$H(f)$ is a semialgebraic subset of $\mathbb{R}$.
\end{lemma}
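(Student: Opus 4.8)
The plan is to show that $H(f)$ is definable by a first-order formula in the language of ordered fields, so that Lemma \ref{Lemma3.1}'s strategy (via Theorem \ref{Tarski}) applies verbatim. The key observation is that membership $t \in H(f)$ is, by Definition, the existence of positive constants $\alpha, \beta, c$ satisfying the inequality \eqref{Eqn1} for all $x \in \mathbb{R}^n$; the difficulty is that quantifying over the exponents $\alpha$ and $\beta$ is not a first-order operation over $\mathbb{R}$. To circumvent this, I would instead invoke Theorem \ref{Thm2.1}: $t \in H(f)$ if and only if there are no sequences of the first or second types of $[f \le t]$. Negating the existence of such sequences gives a bounded-distance condition that can be phrased with quantifiers over real numbers only.

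Concretely, I would first record that the absence of a sequence of the first type of $[f \le t]$ is equivalent to: for every $\delta > 0$ there exists $R > 0$ and $\eta > 0$ such that for all $x \in \mathbb{R}^n$ with $\|x\| \ge R$, $f(x) > t$ and $f(x) \le t + \eta$, one has $\dist(x, [f \le t]) < \delta$ — i.e., points far out and close in value to $t$ from above cannot stay a fixed distance away. Similarly, the absence of a sequence of the second type of $[f \le t]$ is equivalent to: for every $M > t$ there exists $D > 0$ such that for all $x \in \mathbb{R}^n$ with $t < f(x) \le M$, one has $\dist(x, [f \le t]) \le D$ — the distance function stays bounded on each slab $\{t < f \le M\}$. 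Both $\dist(x,[f\le t])$ and the conditions ``$f(x) > t$'', ``$\|x\| \ge R$'', etc., are expressible by first-order formulas, since $\dist(x,[f\le t]) < \delta$ unwinds to $\exists y\, (f(y) \le t \wedge \|x-y\|^2 < \delta^2)$ and $\dist(x,[f\le t]) \le D$ unwinds similarly with $\forall y$ pushed appropriately; care is needed only to handle the empty case $[f \le t] = \emptyset$, which itself is a first-order condition on $t$ and on which $H(f)$ excludes $t$ by definition.

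Assembling these, $H(f)$ is the set of $t \in \mathbb{R}$ such that $[f \le t] \ne \emptyset$ and the two (negated) conditions above both hold; this is a Boolean combination of first-order formulas with the single free variable $t$, hence by the logical form of the Tarski--Seidenberg theorem (Theorem \ref{Tarski}) the set $\{t \in \mathbb{R} : t \in H(f)\}$ is semialgebraic. I expect the main obstacle to be purely bookkeeping: writing down the quantifier prefixes for the two no-sequence conditions correctly (in particular getting the alternation $\forall \delta\, \exists R, \eta\, \forall x \dots$ right, and similarly $\forall M\, \exists D\, \forall x \dots$) and verifying that ``$\dist(x,[f\le t]) < \delta$'' and its non-strict analogue genuinely are first-order when $[f\le t]$ is itself cut out by $f(y) \le t$. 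Once these formulas are in place, no further work is required: semialgebraicity of a subset of $\mathbb{R}$ is immediate from Theorem \ref{Tarski}, exactly as in the proof of Lemma \ref{Lemma3.1}.
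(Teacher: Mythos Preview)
Your proposal is correct and close in spirit to the paper's proof, but the packaging differs slightly. The paper does not write out the first-order formula for $H(f)$ directly; instead it invokes the first formula (Theorem~\ref{Main}) to write $H(f)$ as an interval (with endpoint $h(f)$) minus $F^1(f)$, so that it suffices to show $F^1(f)$ is semialgebraic, and then gives a first-order description of $F^1(f)$ alone together with the same unwinding of $\dist(x,[f\le t])\ge\delta$ that you describe. Your route bypasses Theorem~\ref{Main} and goes straight from Theorem~\ref{Thm2.1}, expressing both ``no first-type sequence'' and ``no second-type sequence'' as first-order conditions on $t$. Both arguments rest on the same two ingredients---the sequence criterion and Tarski--Seidenberg---and your quantifier analysis is correct; the paper's version is marginally shorter because the interval structure of $F^2(f)$ has already been absorbed into Theorem~\ref{Main}, while yours is more self-contained since it does not depend on that structural result.
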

Using the first formula for $H(f)$ (Theorem \ref{Main}), it is enough to show that $F^1(f)$ is semialgebraic. 
\begin{proof}[Proof of Lemma \ref{Lemma2.1}]
	We have 
	\begin{align*}
	F^1(f) = \{t \in \mathbb{R}| \exists \delta > 0,\forall R> 0: \forall &\epsilon >0, \exists x \in\mathbb{R}^n: \|x\|^2 \ge R^2,\\ &0 < f(x) - t < \epsilon, \dist(x, [f \le t]) \ge \delta\},\quad\ \text{(a)}
	\end{align*}
	$$\{x \in \mathbb{R}^n: \dist(x, [f \le t]) \ge \delta \} = \{x \in \mathbb{R}^n: \exists\delta \forall x_0 \in [f \le t], \|x - x_0\|^2 \ge \delta^2\}.\quad \text{(b)}$$
	It follows from (a) and (b) that the set $F^1(f)$ can be determined by a first-order formula, hence it is a semialgebraic subset of $\mathbb{R}$.
\end{proof}
Since $H(f)$ is a semialgebraic subset of $\mathbb{R}$, we have
\begin{corollary}\label{cor2.1}
	If $H(f) \ne \emptyset$ and $H(f) \ne \mathbb{R}$, then it is a union of finitely many points and intervals.
\end{corollary}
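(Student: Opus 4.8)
The plan is to reduce the statement to Lemma \ref{Lemma2.1} together with the one-dimensional structure theory of semialgebraic sets. Recall (see, e.g., \cite{BCR, C, HP}) that every semialgebraic subset of $\mathbb{R}$ is a finite union of points and open intervals: it is described by finitely many polynomial equalities and inequalities in one variable, and the solution set of such a system decomposes into finitely many isolated points together with finitely many (possibly unbounded) open intervals on which all the defining polynomials have constant sign. Thus, once we know $H(f)$ is semialgebraic in $\mathbb{R}$, we can write $H(f) = \bigcup_{i=1}^{m} I_i \cup \bigcup_{j=1}^{s} \{p_j\}$ with the $I_i$ pairwise disjoint open intervals and the $p_j$ points not lying in the closure of any $I_i$, which is exactly the claimed form.

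The semialgebraicity of $H(f)$ is supplied by Lemma \ref{Lemma2.1}, whose proof I would recall in one line: by the first formula of $H(f)$ (Theorem \ref{Main}), it suffices to check that $F^1(f)$ is semialgebraic, and this follows from the explicit first-order description (a)--(b) displayed above via the logical Tarski--Seidenberg Theorem (Theorem \ref{Tarski}); then $H(f)$ is obtained from $[\inf f, +\infty)$ or $(\inf f, +\infty)$ by removing the semialgebraic set $F^1(f)$, hence is itself semialgebraic. The hypotheses $H(f) \neq \emptyset$ and $H(f) \neq \mathbb{R}$ play no essential role beyond excluding the two degenerate decompositions (the empty union, and the single interval $(-\infty,+\infty)$), so the corollary is really just a reformulation of Lemma \ref{Lemma2.1} specialized to these nontrivial cases.

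I do not expect a genuine obstacle: the argument is immediate from Lemma \ref{Lemma2.1} and standard facts about semialgebraic subsets of the line. The only point I would take care to phrase precisely is the reduction inside Lemma \ref{Lemma2.1} to the semialgebraicity of $F^1(f)$, so as to make clear that the first-order formula (a)--(b) indeed captures the definition of a sequence of the first type; once that is granted, invoking Theorem \ref{Tarski} and the structure of semialgebraic subsets of $\mathbb{R}$ finishes the proof with no further computation.
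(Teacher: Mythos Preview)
Your proposal is correct and follows exactly the paper's approach: the corollary is stated immediately after Lemma \ref{Lemma2.1} with no separate proof, since it is just the standard structure of semialgebraic subsets of $\mathbb{R}$ applied to $H(f)$. One small slip in your recap of Lemma \ref{Lemma2.1}: by Theorem \ref{Main}, $H(f)$ is obtained by removing $F^1(f)$ from $[h(f),+\infty)$ or $(h(f),+\infty)$ (not $[\inf f,+\infty)$ in general), but this does not affect the argument since any half-line is semialgebraic.
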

By Corollary \ref{cor2.1}, we have to consider three cases
\begin{enumerate}
	\item[(a)] $H(f) = \mathbb{R}$;
	\item[(b)] $H(f) = \emptyset$;
	\item[(c)] $H(f)$ is a non-empty proper semialgebraic subset of $\mathbb{R}$.
\end{enumerate}
\begin{itemize}
	\item In the case (a), we have only one stable type: $t$ is y-stable for all $t \in \mathbb{R}$;
	\item In the case (b), we have only one stable type: $t$ is n-stable for all $t \in \mathbb{R}$;
	\item In the case (c), $H(f)$ is a disjoint union of the sets of the following types: $$I_{(a^1_i, a^2_i)}, I_{[b^1_j, b^2_j)}, I_{(c^1_k, c^2_k]}, I_{[d^1_l, d^2_l]}, A(m), I_{-\infty}, I_{+\infty}. $$ Where
	\begin{enumerate}
		\item $I_{(a^1_i, a^2_i)} = \emptyset$ or $I_{(a^1_i, a^2_i)} = {(a^1_i, a^2_i)}, i = 1, \dots, p$;
		\item $I_{[b^1_j, b^2_j)} = \emptyset$ or $I_{[b^1_j, b^2_j)} = {[b^1_j, b^2_j)}, j = 1, \dots, q$;
		\item $I_{(c^1_k, c^2_k]} = \emptyset$ or $I_{(c^1_k, c^2_k]} = {(c^1_k, c^2_k]}, k = 1, \dots, r$;
		\item $I_{[d^1_l, d^2_l]}=\emptyset$ or $I_{[d^1_l, d^2_l]} = {[d^1_l, d^2_l]}, l = 1, \dots, s$;
		\item $A(m) = \emptyset$ or $A(m)=\{e_1, \dots, e_m\}$, where $e_1, \dots, e_m$ are isolated points;
		\item $I_{-\infty} = \emptyset$ or $I_{-\infty}=(-\infty, a]$ or $I_{-\infty}=(-\infty, a)$, where $a \in \mathbb{R}$;
		\item $I_{+\infty} = \emptyset$ or $I_{+\infty} = [b, +\infty)$ or $I_{+\infty} = (b, +\infty)$, where $b \in \mathbb{R}$. 
	\end{enumerate}	
	Similarly, $\mathbb{R} \setminus H(f)$ is a disjoint union of the sets of the following types: $$I_{(a'^1_i, a'^2_i)}, I_{[b'^1_j, b'^2_j)}, I_{(c'^1_k, c'^2_k]}, I_{[d'^1_l, d'^2_l]}, A'(m'), I'_{-\infty}, I'_{+\infty}. $$   
\end{itemize}
We have the following definition
\begin{definition}\label{def43}
	Let $t \in \mathbb{R}$.
	\begin{enumerate}
		\item[1.] Recall that $t$ is said to be y-stable if $t \in H(f)$ and there exists an open interval $I(t)$ such that $t \in I(t) \subset H(f)$;
		\item[2.] Recall that $t$ is said to be y-right stable if $t \in H(f)$ and there exists $\epsilon > 0$ such that $[t, t + \epsilon) \subset H(f)$ and $(t - \epsilon, t ) \cap H(f) = \emptyset$;
		\item[3.] $t$ is said to be {\em y-left stable} if $t \in H(f)$ and there exists $\epsilon > 0$ such that $(t - \epsilon, t] \subset H(f)$ and $(t, t + \epsilon) \cap H(f) = \emptyset$;
		\item[4.] $t$ is said to be {\em y-isolated} if $t \in H(f)$ and for $\epsilon > 0$ sufficiently small, $(t - \epsilon, t) \cup (t, t + \epsilon) \subset \mathbb{R} \setminus H(f)$; 
		\item[1'.] Recall that $t$ is called n-stable if $t \in [\inf f, +\infty)\setminus H(f)$ and there exists an open interval $I(t)$ such that $t \in I(t) \subset [\inf f, +\infty) \setminus H(f)$;
		\item[2'.] Recall that $t$ is called n-right stable if $t \in [\inf f, +\infty) \setminus H(f)$ and there exists $\epsilon > 0$ such that $[t, t + \epsilon) \subset [\inf f, +\infty)\setminus H(f)$ and $(t - \epsilon, t ) \cap ([\inf f, +\infty) \setminus H(f)) \ne \emptyset$;
		\item[3'.] Recall that $t$ is called n-left stable if $t \in [\inf f, +\infty) \setminus H(f)$ and there exists $\epsilon > 0$ such that $(t - \epsilon, t] \subset [\inf f, +\infty) \setminus H(f)$ and $(t, t + \epsilon) \cap H(f) = \emptyset$;
		\item[4'.] Recall that $t$ is called n-isolated if $t \in [\inf f, +\infty) \setminus H(f)$ and for $\epsilon > 0$ sufficiently small, $(t - \epsilon, t) \cup (t, t + \epsilon) \subset H(f)$.  
	\end{enumerate}
\end{definition}
Using the first formula of $H(f)$, we have
\begin{theorem}\label{thm45}
	Let $H(f)$ be of the form (c) and $t \in [\inf f, +\infty)$. Then we have  
	     \begin{enumerate} 
			\item[1.] $t$ is y-stable if and only if $t$ is an interior point of the sets $$I_{-\infty} \bigcup\cup_{i=1}^p I_{(a^1_i, a^2_i)} \bigcup \cup_{j=1}^q I_{[b^1_j, b^2_j)} \bigcup \cup_{k=1}^r I_{(c^1_k, c^2_k]} \bigcup\cup_{l=1}^s I_{[d^1_l, d^2_l]} \bigcup I_{+\infty}; $$
			
			\item[2.] $t$ is y-right stable if and only if we have $t = b^1_j$ or $t = d^1_l$ or $t = b$ (where $I_{+\infty} = [b, +\infty)$);
			
			\item[3.] $t$ is y-left stable if and only if we have $t = c^2_k$ or $t = d^2_l$ or $t = a$ (where $I_{-\infty} = (-\infty, a]$);
			
			\item[4.] $t$ is an y-isolated point if and only if $t \in A(m)$.  
			
	    	\item[1'.] $t$ is n-stable if and only if $t$ is an interior point of the set: $$I'_{-\infty} \bigcup\cup_{i=1}^{p'} I_{(a'^1_i, a'^2_i)} \bigcup \cup_{j=1}^{q'} I_{[b'^1_j, b'^2_j)} \bigcup \cup_{k=1}^{r'} I_{(c'^1_k, c'^2_k]}\bigcup \cup_{l=1}^{s'} I_{[d'^1_l, d'^2_l]} \bigcup I'_{+\infty}; $$ 
			
			\item[2'.] $t$ is n-right stable if and only if we have $t = b'^1_j$ or $t = d'^1_l$ or $t = b'$ (where $I'_{+\infty} = [b', +\infty)$);
			
			\item[3'.] $t$ is n-left stable if and only if we have $t = c'^2_k$ or $t = d'^2_l$ or $t = a'$ (where $I'_{-\infty} = (-\infty, a']$);
			
			\item[4'.] $t$ is an n-isolated point if and only if $t \in A'(m')$.  
		\end{enumerate}
\end{theorem}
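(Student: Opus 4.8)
The plan is to reduce the whole statement to an elementary analysis on the real line, the only substantive input being that $H(f)$ is semialgebraic. Recall from Lemma \ref{Lemma2.1} that $H(f)$ is semialgebraic, and that $H(f)\subseteq[\inf f,+\infty)$ (for $t<\inf f$ the set $[f\le t]$ is empty, so there is no GHEB). Being in case (c), $H(f)$ is a nonempty proper semialgebraic subset of $\mathbb{R}$, hence a finite disjoint union of points and intervals, which are exactly its connected components; we group them into the seven families $I_{(a^1_i,a^2_i)},I_{[b^1_j,b^2_j)},I_{(c^1_k,c^2_k]},I_{[d^1_l,d^2_l]},A(m),I_{-\infty},I_{+\infty}$ of the statement, with the convention that a singleton component is listed in $A(m)$ and a nondegenerate closed interval in the $[d^1_l,d^2_l]$ family (so $d^1_l<d^2_l$). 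Since $[\inf f,+\infty)\setminus H(f)$ is likewise semialgebraic, it admits the analogous decomposition with the primed data. Two consequences of finiteness of the decomposition will be used repeatedly: \emph{local one-sidedness} — for every $t\in\mathbb{R}$ there is $\epsilon>0$ such that each of $(t-\epsilon,t)$ and $(t,t+\epsilon)$ is entirely contained in, or entirely disjoint from, $H(f)$ — and \emph{separation of components} — if $t$ is the left (resp.\ right) endpoint of a component and belongs to it, then $(t-\epsilon,t)\cap H(f)=\emptyset$ (resp.\ $(t,t+\epsilon)\cap H(f)=\emptyset$) for small $\epsilon$, since by connectedness a component containing $t$ and extending past it on one side cannot also extend past it on the other without properly containing that endpoint.

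Next I would attach to each $t\in[\inf f,+\infty)$ its \emph{germ triple} $(L,C,R)\in\{0,1\}^3$, where $C=1$ iff $t\in H(f)$, $L=1$ iff $(t-\epsilon,t)\subseteq H(f)$ for small $\epsilon$, and $R=1$ iff $(t,t+\epsilon)\subseteq H(f)$ for small $\epsilon$; this is well defined by local one-sidedness. Unwinding Definition \ref{def43} and using local one-sidedness to turn each ``meets $H(f)$'' into ``contains a sub-interval of $H(f)$'' and each ``misses $H(f)$'' into ``is disjoint from $H(f)$'', each of the eight types becomes one value of the germ triple: y-stable $\leftrightarrow(1,1,1)$, y-right stable $\leftrightarrow(0,1,1)$, y-left stable $\leftrightarrow(1,1,0)$, y-isolated $\leftrightarrow(0,1,0)$, n-stable $\leftrightarrow(0,0,0)$, n-right stable $\leftrightarrow(1,0,0)$, n-left stable $\leftrightarrow(0,0,1)$, n-isolated $\leftrightarrow(1,0,1)$ (the ``stable'' cases additionally requiring $t$ to be an interior point of the relevant set, which is automatic when the triple is $(1,\cdot,1)$ or $(0,\cdot,0)$ except at $t=\inf f$, treated below). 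Since the eight triples exhaust $\{0,1\}^3$, every $t\in[\inf f,+\infty)$ falls into exactly one type, which establishes the exhaustiveness and mutual exclusivity implicit in the statement.

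It then remains to match each germ triple with the stated position of $t$ in the decomposition. For $(1,1,1)$: this says $(t-\epsilon,t+\epsilon)\subseteq H(f)$, i.e.\ $t$ lies in the topological interior of $H(f)$; singleton components contribute no interior points and, by separation of components, an endpoint of an interval component is never interior, so this interior is exactly the union of the open cores of the interval-type pieces — item 1. For $(0,1,1)$: $t\in H(f)$, $H(f)$ extends to the right of $t$ and, by separation, not to the left, so $t$ is the included left endpoint of a left-closed component, necessarily one of $[b^1_j,b^2_j)$, $[d^1_l,d^2_l]$, $[b,+\infty)$ — item 2. The triple $(1,1,0)$ gives item 3 symmetrically, and $(0,1,0)$ gives that $\{t\}$ is a singleton component, i.e.\ $t\in A(m)$ — item 4. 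The four ``n'' items 1$'$–4$'$ are obtained by the verbatim same argument applied to the semialgebraic set $[\inf f,+\infty)\setminus H(f)$ and its primed decomposition.

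I expect the only genuine care needed to be bookkeeping in this last step — keeping the seven interval families, their primed counterparts, and the inclusion or exclusion of each endpoint straight, and verifying in each of the eight cases that the ``meets/misses $H(f)$'' phrasing of Definition \ref{def43} is indeed equivalent, via local one-sidedness, to the corresponding germ-triple condition. The one point deserving separate attention is the left endpoint $t=\inf f$ when $\inf f$ is finite: there $L=0$ necessarily and no open interval around $t$ lies in $[\inf f,+\infty)$, so $t$ is never y-stable or n-stable, in agreement with the ``interior point'' formulations of items 1 and 1$'$, while the endpoint conditions of items 2, 2$'$, 3, 3$'$, 4, 4$'$ remain meaningful because a component may begin at $\inf f$; this case is checked directly. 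Beyond this, there is no further analytic content, the substantive fact — semialgebraicity of $H(f)$ — being Lemma \ref{Lemma2.1}, already proved.
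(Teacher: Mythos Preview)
Your proposal is correct and is essentially the same approach the paper takes --- indeed, the paper gives no proof at all beyond the one-line remark ``Using the first formula of $H(f)$'' preceding the theorem statement, and your argument (semialgebraicity of $H(f)$ from Lemma~\ref{Lemma2.1}, finite decomposition into points and intervals, then matching each stability type of Definition~\ref{def43} to a position in the decomposition) is exactly the content that phrase abbreviates. Your germ-triple device and the explicit handling of the endpoint $t=\inf f$ are more careful than anything the paper writes; one small discrepancy is that the paper decomposes $\mathbb{R}\setminus H(f)$ for the primed data whereas you decompose $[\inf f,+\infty)\setminus H(f)$, but since the theorem only concerns $t\in[\inf f,+\infty)$ this is immaterial.
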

\begin{remark}{\rm 
		In the above list, we collect all types of stability that could theoretically exist. The problem of deciding when this or that type really appears, seems to be very difficult.
}\end{remark}
We finish our paper by considering the following simple example
\begin{example}\label{Ex1}{\rm
		Let $f(x,y) = (y^2-1)^2 + (xy - 1)^2$ (\cite{HT}). Clearly, $f$ is of the form $(*)$.
		
		We have $\dfrac{\partial f}{\partial y} = 4y^3 + 2x^2y - 4y - 2x = 2(2y^3 + x^2y - 2y - x)$. Hence, the roots of $\dfrac{\partial f}{\partial y} = 0$ are: 
		\begin{align*}
		x_1(y) &= \dfrac{1 + \sqrt{-8y^4 + 8y^2 + 1}}{2y}\ \text{and}\ \lim_{y \to 0}x_1(y) = +\infty,\\
		x_2(y) &= \dfrac{1 - \sqrt{-8y^4 + 8y^2 + 1}}{2y}\ \text{and}\ \lim_{y \to 0}x_2(y) = -\infty.
		\end{align*}
		We have
			\begin{align*}
			 \lim_{y \to 0}\dfrac{\partial f}{\partial x}(x_i(y), y) = 0, i =1,2 &\Rightarrow \lim\limits_{(x,y) \in V_1,\|(x,y)\|\to\infty}\|\nabla f(x,y)\| = 0;\\
			\lim\limits_{y \to 0}f(x_i(y), y) = 1, i=1,2 &\Rightarrow \lim\limits_{(x,y) \in V_1,\|(x,y)\|\to\infty}f(x,y) = 1.
			\end{align*}
			Hence $P(f) = \{1\}$.
			
			It is not difficult to show that
			\begin{itemize}
				\item $F^2(f) = [0, 1)$, hence $h(f) = 1$;
				\item and $F^1(f) = \emptyset$.
			\end{itemize}
			  Therefore, by the second formula, $H(f) = [1, +\infty)$. In this example, for any $t \in [0, +\infty)$:
		\begin{itemize}
			\item If $t \in (1, + \infty)$, then $t$ is y-stable;
			\item If $t = 1$, then $t$ is y-right stable;
			\item If $t \in (0,1)$, then $t$ is n-stable;
			\item If $t = 0$, then $t$ is n-right stable.
		\end{itemize}
}\end{example}
\subsection*{Acknowledgments}
This research was partially supported by National Foundation for Science and Technology Development (NAFOSTED), Vietnam; Grant numbers 101.04-2017.12 of the first author and 101.04-2019.302 of the second author.

\bibliographystyle{amsalpha}

\end{document}